\newtheorem{theorem}{{Theorem}}[section]
\newtheorem{proposition}[theorem]{{Proposition}}
\newtheorem{lemma}[theorem]{{Lemma}}
\newtheorem{corollary}[theorem]{{Corollary}}
\newtheorem{remark}[theorem]{{Remark}}
\definecolor{greenbf}{rgb}{0, 0.7 ,0.3}
\def\C{\mathbb{C}}
\def\R{\mathbb{R}}
\def\H{\mathbb{H}}
\def\Z{\mathbb{Z}}
\def\S{\mathbb{S}}
\def\T{\mathbb{T}}
\def\Isom{{\sf{Isom}}}
\def\SOL{{\sf{SOL}}}
\def\Euc{{\sf{Euc}}}
\def\SOL{{\mathsf{Sol}} }
\def\sol{{\mathfrak{sol}} }
\def\euc{{\mathfrak{euc}} }
\def\gl{{\mathfrak{gl}} }
\def\Heis{{\sf{Heis}}}
\def\SO{{\sf{SO}}}
\def\SL{{\sf{SL}}}
\def\Span{{\sf{span}} }
\def\dS{{\sf{dS}} }
\def\det{{\sf{det}}}
\def\k{{\mathfrak{k}}}
\def\g{{\mathfrak{g}}}
\def\sl{{\mathfrak{sl}}}
\def\s{{\mathfrak{s}}}
\def\h{{\mathfrak{h}}}
\def\r{{\mathfrak{r}}}
 \newcommand{\ad}{\mathrm{ad}}
\newcommand{\Ad}{\mathrm{Ad}}
\begin{document}
\title[]{On closed geodesics in Lorentz manifolds}

\author{S. Allout} 
\address{Fakult\"at f\"ur Mathematik, Ruhr-Universit\"at Bochum, Germany}
\email{souheib.allout@rub.de}

\author{A. Belkacem}
\address{Department of mathematics, University of Batna 2, Algeria}
\email{abderrahmane.belkacem.matea@gmail.com}
 
\author{A. Zeghib}
\address{UMPA, CNRS, ENS de Lyon, France}
\email{abdelghani.zeghib@ens-lyon.fr}

\date{\today}

\begin{abstract}
 We construct compact Lorentz manifolds without closed geodesics. 
\end{abstract}

\maketitle

\section{Introduction} 
It is well known that (non-trivial) closed geodesics in compact Riemannian manifolds always exist (see for example \cite{Kli}). There has been, then, a lot of activity around the semi-Riemannian counterpart, mainly the Lorentzian case. Before going further, let us first introduce the following definition. A (non-trivial) geodesic $\gamma : I \to M$ in a semi-Riemannian manifold $(M, g)$ is called \textbf{\textit{weakly closed}} if there exists $s, t \in I$ with $t\neq s$, such that $\gamma(s) = \gamma(t)$ and $\dot{\gamma}(s)$ is proportional but not equal to $\dot{\gamma}(t)$ and called \textbf{\textit{closed}} if $\dot{\gamma}(s)=\dot{\gamma}(t)$. Observe that weakly closed geodesics are necessarily isotropic and incomplete. 

Our first result provides an example of a compact semi-Riemannian manifold without closed or weakly closed geodesics. More precisely, let $\SOL$ and $\Euc$ be the identity components of the isometry groups of the Minkowski and the Euclidean planes respectively and put $G=\SOL\times \Euc$. Then
\begin{theorem}\label{non periodic}
    There exists a left invariant metric on $G$, of signature $(2,4)$, and a cocompact lattice $\Gamma\subset G$ such that $\Gamma\backslash G$ is without closed or weakly closed geodesics.
\end{theorem}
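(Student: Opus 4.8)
The plan is to reduce the existence of a closed geodesic to a purely algebraic return condition and then obstruct it by an arithmetically rigid lattice. For a geodesic $\gamma$ with $\gamma(0)=e$ let $\xi(t)=(dL_{\gamma(t)})^{-1}\dot\gamma(t)\in\g$ be its body velocity, which solves the Euler--Arnold equation
\[
\langle\dot\xi,Y\rangle=\langle\xi,[\xi,Y]\rangle\qquad\text{for all }Y\in\g .
\]
Since left translations are isometries of the left invariant metric, the image of $\gamma$ in $\Gamma\backslash G$ is a closed geodesic exactly when there are $T>0$ and $\gamma_0\in\Gamma$ with $\gamma(T)=\gamma_0$ and $\xi(T)=\xi(0)$; for a weakly closed geodesic the last equation is relaxed to $\xi(T)=\lambda\,\xi(0)$ with $\lambda\neq1$ (and $\xi(0)$ isotropic when $\lambda\neq\pm1$). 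I would first record the attendant conservation law: the covector $\mu(t)=\langle\xi(t),\cdot\,\rangle\in\g^{*}$ satisfies that $\Ad^{*}_{\gamma(t)}\mu(t)$ is constant, so a closed geodesic forces $\Ad^{*}_{\gamma_0}\mu_0=\mu_0$ and a weakly closed one forces $\Ad^{*}_{\gamma_0}\mu_0=\lambda\,\mu_0$. This converts the dynamical question into: which $\gamma_0\in\Gamma$ can fix (respectively scale) a nonzero momentum realized by an honest returning geodesic?

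Next I would construct $\Gamma$ so that this eigen-covector condition is almost never met. Write $\g=\sol\oplus\euc$ with $\sol=\langle H,F_1,F_2\rangle$, $[H,F_1]=F_1$, $[H,F_2]=-F_2$, and $\euc=\langle R,P_1,P_2\rangle$, $[R,P_1]=P_2$, $[R,P_2]=-P_1$; the nilradical is the abelian ideal $\n=\langle F_1,F_2,P_1,P_2\rangle$ and $G/N\cong\R_{\mathrm{boost}}\times\SO(2)_{\mathrm{rot}}$. The crucial step is to pick one generator $g\in\Gamma$ projecting to $(t_0,\alpha)$ whose adjoint action $\Ad_g|_{\n}=\mathrm{diag}(e^{t_0},e^{-t_0})\oplus\rho_\alpha$ preserves a lattice $\Lambda\subset\n$. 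This happens precisely when the characteristic polynomial $(x^{2}-k\,x+1)(x^{2}-2\cos\alpha\,x+1)$ lies in $\Z[x]$, i.e.\ when $k=e^{t_0}+e^{-t_0}$ and $2\cos\alpha$ are conjugate quadratic integers; taking them to be the roots of $x^{2}-3x+1$ gives $k=\tfrac{3+\sqrt5}{2}>2$ (hyperbolic boost) and $2\cos\alpha=\tfrac{3-\sqrt5}{2}\in(-2,2)$ (genuine rotation). Since $2\cos\alpha$ has the Galois conjugate $k>2$ lying outside $[-2,2]$, the angle $\alpha$ must be an irrational multiple of $\pi$, for a rational multiple would force every conjugate of $2\cos\alpha$ into $[-2,2]$. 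Setting $\Gamma=\Lambda\rtimes_A\Z g$ then yields a cocompact, genuinely irreducible lattice in which each boost step is locked to an irrational rotation.

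With this $\Gamma$ the eigenvalues of $\Ad_{g^{n}}|_{\n}$ are $e^{\pm nt_0}$ and $e^{\pm i n\alpha}$, none equal to $1$ when $n\neq0$, the irrationality of $\alpha/\pi$ being exactly what removes the rotational eigenvalue $1$. Hence for $n\neq0$ the only fixed covectors of $\Ad^{*}_{g^{n}}$ annihilate $\n$, so the closing condition forces $\xi(0)\perp\n$; in the weakly closed case the complex eigenvalues $e^{\pm i n\alpha}$ cannot be a real $\lambda$, so $\mu_0$ must lie along the real eigendirections $F_1^{*}$ or $F_2^{*}$, i.e.\ $\xi(0)$ is pinned to the isotropic $\sol$-directions. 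In either case the candidate initial velocity is trapped in a two-dimensional subspace and must moreover initiate a geodesic that actually returns, which I would rule out using energy $\langle\xi,\xi\rangle$ together with the explicit momentum map. The case $n=0$ is different: then $\gamma_0\in\Lambda$, $\Ad_{\gamma_0}$ is unipotent, and the condition degenerates, via $v=\log\gamma_0\in\n$, to $\langle\xi(0),[v,\,\cdot\,]\rangle=0$ together with a genuine return inside the flat torus $N/\Lambda$.

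The hard part, and where the signature $(2,4)$ metric has to be designed with care, is exactly these residual families. For a block (product) metric the torus $N/\Lambda$ is totally geodesic and flat, hence carries many closed geodesics, and the one-parameter subgroups $\exp\!\big(t(\eta H+\omega R)\big)$ are geodesics that close along the suspension direction of slope $\alpha/t_0$; both must be destroyed at once. I would therefore fix a metric in which $H,R$ are strongly non-orthogonal to $\n$, so that $\dot\xi$ acquires a Cartan component for $\xi\in\n$ (making $N$ fail to be totally geodesic and preventing any geodesic tangent to $\n$ from returning) and so that no nonzero vector of $\n^{\perp}$ is a relative equilibrium (killing the pure boost--rotation subgroups). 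The remaining task is to verify that this single metric keeps the Euler--Arnold system integrable enough---through energy and the conserved momentum---to check, case by case in $n$, that the return equations have only the trivial solution. Exhibiting one $(2,4)$ metric that simultaneously reconciles all of these incompatibilities is the crux of the argument, and I expect it to be the main obstacle.
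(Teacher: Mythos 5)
Your proposal correctly identifies the two halves of the problem and fully solves only one of them. The arithmetic half is sound, and in fact your lattice construction is a nice, more explicit alternative to the paper's: the paper obtains $\varphi=A\oplus R_\alpha\in\SL(4,\Z)$ (hyperbolic block times irrational rotation) indirectly, from loxodromic elements of $\SO(1,3)\cap\SL(4,\Z)$ via Borel--Harish-Chandra and Borel density, whereas your companion-matrix argument with the conjugate quadratic integers $k=e^{t_0}+e^{-t_0}$ and $2\cos\alpha$ roots of $x^2-3x+1$ (so the quartic $(x^2-kx+1)(x^2-2\cos\alpha\,x+1)=x^4-3x^3+3x^2-3x+1$ lies in $\Z[x]$, and $\alpha/\pi$ is irrational because the Galois conjugate of $2\cos\alpha$ exceeds $2$) produces the same kind of lattice $\Gamma=\Z\ltimes\Gamma_0$ directly. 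Your eigenvalue analysis of $\Ad^*_{g^n}$ then recovers, in momentum language, the same necessary conditions the paper extracts: candidate closed or weakly closed geodesics are pinned to directions in $\n$ or to the boost line.

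The genuine gap is the dynamical half, which you yourself flag as ``the main obstacle'': no metric of signature $(2,4)$ is ever exhibited, and the momentum condition $\Ad^*_{\gamma_0}\mu_0=\lambda\mu_0$ is only a necessary condition --- it leaves you with residual families (geodesics tangent to $\n$ returning in the torus $N/\Gamma_0$, the suspension-direction subgroups $\exp(t(\eta H+\omega R))$, and the $n=0$ unipotent case) that you propose to kill by an unspecified metric and an unverified case-by-case integration of the Euler--Arnold system. This is precisely where the paper's actual work lies. Its solution is a product metric $q\oplus g$ of two carefully designed $3$-dimensional Lorentzian metrics on $\sol$ and $\euc$, each engineered so that the Euler--Arnold field has an invariant lightlike plane $P$ on which the field is $(x,y)\mapsto y^2e_1$ (so the flat torus is \emph{not} totally geodesic in your feared sense: the only direction-periodic solutions in $P$ are the constants along $\R e_1$), so that $\euc$ admits no nontrivial radial solutions at all (the field is transverse to $\{y=0\}$ off $P$, killing your $\exp(t(\eta H+\omega R))$ worry, since $R$ is not even a geodesic direction), and so that on $\sol$ an equivariant projection $\sigma$ onto the unique radial line $\R h$ controls everything. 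The product structure then forces every direction-periodic solution of the six-dimensional system to be radial, landing in $\R h$ or $\Span(e_1,f_1)\cup\Span(e_1,f_2)$, and only then does the arithmetic of $\Gamma$ finish the proof. Without exhibiting such a metric and classifying \emph{all} direction-periodic (not merely momentum-compatible) solutions, your argument remains an outline of the strategy rather than a proof.
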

The study of closed geodesics in semi-Riemannian manifolds received more attention in the Lorentzian case. The first result we want to mention in this direction is due to Tipler \cite{Tip} where it is shown that a compact spacetime with a covering space
containing a compact Cauchy surface admits closed timelike geodesics. The compactness assumption of the Cauchy surface is necessary as shown by Guediri \cite{Gue1} where he provides examples of compact flat Lorentz space forms without closed timelike geodesic, but they admit, however, closed lightlike geodesics.

In \cite{Gal2}, Galloway shows that every compact two-dimensional Lorentzian manifold contains at least one closed non-lightlike or (weakly) closed lightlike geodesic. He also constructed a three-dimensional example without closed non-spacelike geodesics. Galloway's result doesn't ensure existence of closed geodesics in Lorentzian surfaces. This was settled later by Suhr in \cite{Suh} where he shows that every compact Lorentzian surface contains at least two closed geodesics, one of them is non-lightlike and then constructs examples showing optimality of this bound. 

Galloway in \cite{Gal1}, Guediri in \cite{Gue1, Gue4}, S\'anchez in \cite{San} and Flores et al. in \cite{FJP} provide many existence (and non-existence) results of closed timelike geodesics in compact Lorentzian manifolds. See also \cite{CEG, Mas, Mas2} for further investigations.

The question whether a compact Lorentzian manifold admits closed geodesics remained open (see Question 9.1.1 in the recent survey \cite{BuM}). The following examples provide a negative answer:
\begin{theorem}\label{non closed}
    Let $G$ be either $\SL(2,\R)$ or $\SOL$. Then there exists a left invariant Lorentzian metric on $G$ such that every compact quotient $\Gamma\backslash G$ is without closed geodesics. However, $\Gamma\backslash G$ admits a countable collection of weakly closed null geodesics.
\end{theorem}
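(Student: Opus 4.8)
\emph{Proof proposal.} The plan is to reduce closedness to the Euler--Arnold (geodesic) flow on $\g$ and then to control the resulting developed holonomy inside $G$. Recall that for a left invariant metric the body velocity $v(t)=(dL_{\tilde\gamma(t)^{-1}})\dot{\tilde\gamma}(t)\in\g$ of a geodesic $\tilde\gamma$ of $G$ obeys $\dot v=\ad^*_v v$, where $\langle \ad^*_v w,u\rangle=\langle w,[v,u]\rangle$. Writing $g_v(t)$ for the geodesic issued from $e$ with this body velocity, one checks $g_v(t+T)=g_v(T)\,g_v(t)$ whenever $v$ is $T$-periodic, and hence that a geodesic of $\Gamma\backslash G$ is closed if and only if $v$ is $T$-periodic and the displacement $g_*:=g_v(T)$ is conjugate into $\Gamma$. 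Since the Euler--Arnold field is quadratic, hence scaling covariant, a weakly closed null geodesic will arise instead from an \emph{incomplete} null orbit whose values are mutually proportional and whose developed ray meets itself modulo $\Gamma$ with two unequal proportional speeds. Thus the statement reduces to (i) classifying the periodic orbits of $\dot v=\ad^*_v v$, (ii) showing their displacements avoid every lattice, and (iii) exhibiting incomplete self-proportional null orbits that close up weakly.

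For $G=\SOL$ write $\sol=\langle e_1,e_2,e_3\rangle$ with $[e_3,e_1]=e_1$, $[e_3,e_2]=-e_2$, $[e_1,e_2]=0$, so that $e_1,e_2$ are the eigendirections of the monodromy and $e_3$ the boost generator. I would take the left invariant metric making the translation plane $\langle e_1,e_2\rangle$ \emph{degenerate}: $\langle e_1,e_3\rangle=\langle e_2,e_2\rangle=1$ and all other products of basis vectors zero, which has signature $(2,1)$. With $v=ae_1+be_2+ce_3$ the Euler--Arnold system becomes $\dot a=b^2-ac,\ \dot b=-bc,\ \dot c=c^2$. The decisive feature is the last equation: $c$ is strictly monotone unless $c\equiv0$, so a periodic orbit forces $c\equiv0$, then $b\equiv b_0$ and $a=a_0+b_0^2t$, whence periodicity forces $b_0=0$. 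Hence the only periodic velocity orbits are the equilibria $v=a_0e_1$, whose displacements are pure translations along the eigendirection $e_1$.

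It then remains to see these never close up, uniformly in $\Gamma$, and to build the weakly closed geodesics. Any cocompact $\Gamma\subset\SOL$ is a mapping torus $L\rtimes_A\Z$ with $L\subset\R^2$ a lattice preserved by a hyperbolic $A$ whose eigendirections are exactly $e_1,e_2$; as $A|_L$ is hyperbolic in $\mathrm{GL}(L)$ its eigenvalues are irrational, so the $e_1$-axis meets $L$ only at $0$. Conjugation sends a pure $e_1$-translation $(v,0)$ to $(A_\phi v,0)$, which stays on the $e_1$-axis with zero boost part, hence is never conjugate into $\Gamma$; so there is no closed geodesic. For the weakly closed geodesics I would use the invariant orbit $a\equiv b\equiv0$, $c(t)=c_0/(1-c_0t)$ with $c_0>0$: it is null ($\langle e_3,e_3\rangle=0$), incomplete (blow-up at $t=1/c_0$), and develops into a vertical boost ray $(x_0,-\log(1-c_0t))$. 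Taking $x_0$ to be the fixed point of a hyperbolic element $\gamma_0=(w,\log\lambda)\in\Gamma$, one has $\gamma_0\cdot(x_0,\theta)=(x_0,\theta+\log\lambda)$, so the congruence $\tfrac{1-c_0s}{1-c_0t}=\lambda^{k}$ ($k\ge1$) yields self-intersections at which the speeds differ by $c(t)/c(s)=\lambda^k\neq1$; ranging over the countably many hyperbolic classes of $\Gamma$ produces the announced countable family of weakly closed null geodesics.

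For $G=\SL(2,\R)$ I would run the same scheme with a nilpotent direction playing the role of $e_1$: choose a Lorentzian left invariant metric deforming the Killing form so that the split direction $H$ is no longer a geodesic, and for which the Euler--Arnold system again carries a sign-definite component confining periodic orbits to the $\ad$-invariant null line. The displacement of such an orbit is then \emph{parabolic} (a unipotent $\exp(tE)$), and since a cocompact lattice in $\SL(2,\R)$ contains no parabolic and no infinite order elliptic element, it is conjugate into no $\Gamma$; incompleteness of the parabolic null ray then furnishes the weakly closed null geodesics exactly as above. The main obstacle, in both cases, is the first step: pinning down a left invariant metric whose Euler--Arnold ODE admits a monotone first integral forcing every periodic velocity orbit into the benign eigendirection/nilpotent null locus. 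Once this is secured, the remainder is the uniform arithmetic of lattices (irrational eigendirections for $\SOL$, absence of parabolics for $\SL(2,\R)$) together with the incompleteness bookkeeping for the weakly closed orbits.
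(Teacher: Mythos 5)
Your $\SOL$ half is correct and is, in substance, the paper's own construction: your metric is exactly the paper's $q$ (with $e_3=h$), your system $\dot a=b^2-ac$, $\dot b=-bc$, $\dot c=c^2$ is the paper's $\ad_v^*v=(y^2-xz)e_1-yze_2+z^2h$, and your ``decisive'' monotone component $\dot c=c^2$ is precisely the paper's equivariance of the projection $\sigma:\sol\to\R h$. Your lattice arithmetic (hyperbolic monodromy has irrational eigendirections, so $L\cap\R e_1=\{0\}$ and no conjugate of a pure $e_1$-translation lies in $\Gamma$) and your weakly closed family along the incomplete radial null solution $c(t)=c_0/(1-c_0t)$, closed up by choosing $x_0$ the fixed point of a hyperbolic $\gamma_0\in\Gamma$, also match the paper's proof; and since the theorem only asserts nonexistence of closed geodesics plus existence of a countable weakly closed family, classifying genuinely periodic velocity orbits (rather than all direction-periodic ones, which is what the paper's spherized dynamics in Remark \ref{dyn} handles) suffices, and this half is complete.

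The $\SL(2,\R)$ half, however, is a genuine gap, and you concede it yourself: no metric is produced, and the key step --- a monotone component confining periodic velocity orbits --- is deferred as ``the main obstacle.'' The paper carries this out concretely: $q(\cdot,\cdot)=\langle\cdot,A\cdot\rangle$ with $\langle .,.\rangle$ the Killing form and $A$ unipotent upper triangular in the basis $e,h,f$; the lightlike plane $P=\Span(e,h)$ is an $A$-invariant totally geodesic subalgebra, the only radial directions are $\R e$ (constant) and the null $v_0=\tfrac38 e-\tfrac12 h+f$ with $Av_0=[Av_0,v_0]$, and the projection onto $\R v_0$ along $P$ is equivariant --- the analogue of your $\dot c=c^2$. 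Worse, your sketch mislocates the weakly closed geodesics: you propose that the periodic locus is the nilpotent null line with parabolic displacement, and that ``incompleteness of the parabolic null ray'' then furnishes the weakly closed geodesics. But a weakly closed geodesic still requires the relevant one-parameter group to have a conjugate meeting $\Gamma$ nontrivially, and no conjugate of the parabolic subgroup meets a cocompact lattice (equivalently, its cosets project to dense horocycle orbits), so the parabolic ray can never close up, weakly or otherwise. In the paper's model the parabolic direction $\R e$ carries the \emph{constant, complete} solutions, while the incomplete radial null direction $\R v_0$ producing the weakly closed geodesics is \emph{hyperbolic} (its Killing norm is $2>0$), which is exactly why its conjugates meet $\Gamma$ in countably many ways. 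Your sketch has these two roles reversed, so for $\SL(2,\R)$ you are missing both the construction and the correct mechanism.
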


Unlike the locally homogeneous case, closed geodesics always exist in the homogeneous setting. More precisely
\begin{theorem}\label{homogeneous}
    A compact homogeneous semi-Riemannian manifold $(M,g)$ admits closed geodesics. If, in addition, $(M,g)$ is Lorentzian then it admits both timelike and spacelike closed geodesics.
\end{theorem}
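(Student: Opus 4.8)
The plan is to produce the required closed geodesics as periodic orbits of one-parameter subgroups of the isometry group, using the compactness of $M$ to locate them. Write $G=\Isom^0(M,g)$ for the identity component of the isometry group and realise $M=G/H$, so that every $X\in\g=\Lie(G)$ gives a Killing field $X^{*}$ on $M$ with flow $\exp(tX)$. Set $f_X=g(X^{*},X^{*})\colon M\to\R$. The analytic heart of the argument is the standard identity for Killing fields $\nabla_{X^{*}}X^{*}=-\tfrac12\operatorname{grad}f_X$, from which one reads off both that $X^{*}(f_X)=0$ (so $f_X$ is constant along each integral curve of $X^{*}$) and that such an integral curve is a geodesic exactly where $\operatorname{grad}f_X$ vanishes along it.

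The key observation is then the following. Since $M$ is compact, $f_X$ attains a global maximum at some $q$; because $f_X$ is constant along the flow line through $q$, every point of that flow line is again a global maximum of $f_X$, hence a critical point, so $\operatorname{grad}f_X\equiv 0$ along it and the orbit $\gamma(t)=\exp(tX)\cdot q$ is a geodesic (complete, since $X^{*}$ is a complete field on compact $M$) of constant causal character $g(\dot\gamma,\dot\gamma)=\max f_X$. The same applies at a minimum. Moreover this orbit is a \emph{closed} geodesic as soon as it returns to $q$: if $\exp(t_0X)\in\Stab(q)$ then $\gamma(t_0)=q$ and automatically $\dot\gamma(t_0)=X^{*}(q)=\dot\gamma(0)$, so the velocities match for free; and if we insist $\max f_X>0$ (resp. $\min f_X<0$) the velocity $X^{*}(q)$ is nonzero, so the orbit is a genuine nonconstant geodesic.

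It remains to arrange periodicity together with the prescribed causal type. When $G$ contains a nontrivial torus $T\subset G$ I would take $X\in\Lie(T)$ generating a circle subgroup; then $\exp(tX)$ is periodic in $G$, the maximum (or minimum) orbit closes up, and we obtain a closed geodesic. For the Lorentzian statement I use that transitivity makes the evaluation map $\g\to T_pM,\ X\mapsto X^{*}(p)$ surjective: choosing $X,X'$ with $X^{*}(p)$ timelike and $X'^{*}(p)$ spacelike forces $\min f_X<0$ and $\max f_{X'}>0$, producing a closed timelike and a closed spacelike geodesic respectively, provided the relevant one-parameter subgroups can be taken periodic.

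The hard part will be the case where $G$ has no nontrivial compact subgroup — for instance $G$ simply connected solvable, or a universal cover of a semisimple group such as $\widetilde{\SL(2,\R)}$ — since then no $\exp(tX)$ is itself periodic and the orbit through a maximum of $f_X$ may be a dense, non-closed complete geodesic. Here I would exploit the structure of the transitive action: compactness furnishes a finite $G$-invariant volume, $M$ is then a compact homogeneous space of solvable or semisimple type, and I would use an invariant fibration (a Mostow fibration over a torus in the solvable case, or an elliptic element obtained after descending to the appropriate quotient group in the semisimple case) to manufacture one-parameter subgroups with genuinely periodic orbits in $M$, to which the maximum/minimum lemma applies. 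Reconciling this periodicity with the sign requirement on $f_X$ that encodes the causal type — timelike in one case, spacelike in the other — is the delicate point the proof must resolve.
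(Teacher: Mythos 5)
Your first half is sound and coincides with the paper's Lemma \ref{precompact}: the Killing identity $\nabla_{X^{*}}X^{*}=-\frac12\operatorname{grad}f_X$, the flow-invariance of $f_X$, and the closing up of extremal orbits of circle subgroups (including the observation that the velocity matches automatically upon return) is exactly how the paper handles the case where $\Isom_0(M,g)$ contains compact subgroups; there too one perturbs to circle-generating elements, which are dense in a compact group. But the case you explicitly defer --- $G$ with no nontrivial compact subgroup --- is the actual content of the theorem, and your sketch for it cannot work as stated. The obstruction is structural: your mechanism produces geodesics \emph{only} from orbits through critical points of $f_X$, whereas closing up requires the orbit to be periodic in $M$, and when $G$ has no compact subgroup these two constraints are unrelated --- the orbit through a maximum of $f_X$ may be dense and non-closed (as you note), while a one-parameter subgroup whose orbit in $M$ does close up has no reason to pass through a critical point of $f_X$, so ``applying the maximum/minimum lemma'' to the periodic orbits manufactured from a Mostow fibration does not produce geodesics.

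The paper resolves this with an input your proposal is missing: by Theorem A of \cite{BGZ}, when the Levi factor $\s$ of $\g$ has no compact factor, the $G$-action on a compact semi-Riemannian homogeneous space is locally free and the pulled-back metric on $G$ is \emph{bi-invariant}. Bi-invariance eliminates the critical-point condition wholesale: every coset of every one-parameter subgroup is a geodesic, and existence of closed geodesics becomes purely algebraic --- find one-parameter subgroups meeting the discrete isotropy $\Gamma$. Concretely, for $\s\neq\{0\}$ the paper uses elliptic elements of $\s$, whose flows are equicontinuous (they preserve a right-invariant Riemannian metric) and hence precompact, reducing to Lemma \ref{precompact}; for $\g$ solvable it uses that $\Gamma$ meets the nilradical in a lattice and that $\exp$ is a diffeomorphism on a simply connected nilpotent group, so lattice elements lie on geodesic one-parameter subgroups. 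Your Lorentzian argument (``choose $X$ with $X^{*}(p)$ timelike, provided the subgroup can be taken periodic'') suffers the same periodicity gap; the paper instead invokes the classification of \cite{Zeg} --- $M$ is covered by $H\times N$ with $H$ either $\widetilde{\SL}(2,\R)$ or an oscillator group $\S^1\ltimes\Heis_{2n+1}$ carrying a bi-invariant metric --- and then obtains spacelike closed geodesics from hyperbolic elements of $\Gamma$ (or from the lattice in the degenerate totally geodesic $\Heis_{2n+1}$) and timelike ones from elliptic elements of $\h$, which are timelike for the bi-invariant metric. Note that these closed geodesics arise from one-parameter subgroups that are \emph{not} periodic in $G$ and whose orbits pass through no critical point of any length function, so they are invisible to your method.
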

\subsubsection*{Organization of the paper} Section \ref{proof homogeneous} is dedicated to the proof of Theorem \ref{homogeneous}. In Section \ref{LIM} we present some background about geodesics of left invariant metrics needed for the constructions later. We prove Theorems \ref{non closed} and \ref{non periodic} in Sections \ref{SolSL} and \ref{SolEuc} respectively.
\subsubsection*{Acknowledgements} We warmly thank the referee for the valuable remarks. We also thank Stefan Suhr for his helpful comments on this manuscript. The first author is fully supported by the SFB/TRR 191 `Symplectic Structures in Geometry, Algebra and Dynamics', funded by the DFG (Projektnummer 281071066 - TRR 191).
\section{The homogeneous case: proof of Theorem \ref{homogeneous}}\label{proof homogeneous}
The main purpose of this section is to show existence of closed geodesics in the case of compact homogeneous semi-Riemannian manifolds.
\begin{lemma}\label{precompact}
Let $X$ be a Killing vector field on a compact semi-Riemannian manifold $(M,g)$. The $X-$orbits of critical points of the function $p\mapsto g(X(p),X(p))$ are geodesics. If $X$ generates a precompact one-parameter subgroup of $\Isom(M,g)$ then it can be approximated by a Killing field $X^{\prime}$ whose flow defines a circle action. It follows that $(M,g)$ has closed geodesics. Moreover, $(M,g)$ has a spacelike (resp. timelike) closed geodesic if $X^{\prime}$ is spacelike (resp. timelike) somewhere.   
\end{lemma}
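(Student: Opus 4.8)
The plan is to follow the three stages implicit in the statement: the critical-point characterisation of geodesic orbits, the torus approximation, and the extraction of a nontrivial closed geodesic together with its causal type. I would begin with the geometric core. Since $X$ is Killing, the endomorphism $Y\mapsto \nabla_Y X$ is $g$-skew-symmetric, so setting $Z=X$ in the Killing equation gives $g(\nabla_Y X, X) = -g(\nabla_X X, Y)$ for every $Y$. Differentiating $f:=g(X,X)$ by metric compatibility then yields the pointwise identity
\begin{equation*}
df(Y) = 2\,g(\nabla_Y X, X) = -2\,g(\nabla_X X, Y).
\end{equation*}
As $g$ is nondegenerate, $p$ is a critical point of $f$ if and only if $(\nabla_X X)(p)=0$. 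Moreover $f$ is invariant under the flow of $X$ (the flow consists of isometries and fixes $X$), hence $df$ is flow-invariant and the whole $X$-orbit through a critical point consists of critical points; along such an orbit $\nabla_X X\equiv 0$, so the orbit, parametrised by the flow, is a geodesic.

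Next, for the approximation step, I would invoke that $\Isom(M,g)$ is a Lie group and that the closure $T$ of the precompact one-parameter group generated by $X$ is a compact connected abelian subgroup, i.e. a torus $\mathbb{T}^k$ with Lie algebra $\Lie(T)\cong\R^k$ containing $X$. The elements of $\R^k$ that generate circle subgroups are exactly the rational directions, which are dense; choosing such a direction $X'$ close to $X$ and transporting it through the linear, continuous identification of $\Lie(T)$ with Killing fields produces a Killing field $X'$, arbitrarily close to $X$, whose flow is periodic and defines a circle action on $M$.

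Finally I would extract the geodesic. The function $f':=g(X',X')$ on the compact manifold $M$ attains its maximum and minimum. If $f'$ is nonconstant, at least one extremum $p$ satisfies $f'(p)\neq 0$, hence $X'(p)\neq 0$; if $f'$ is a nonzero constant then $X'$ vanishes nowhere; and if $f'\equiv 0$ then every point is critical while $X'\not\equiv 0$ (since $X'\approx X\neq 0$). In every case we obtain a critical point $p$ with $X'(p)\neq 0$, and its $X'$-orbit is then both a circle (the action is by circles and $p$ is not a fixed point) and a geodesic by the first step, hence a nontrivial closed geodesic. For the causal refinement I would exploit the openness of the causal conditions: if $X$ is timelike at some $q$, then so is $X'$ once $X'$ is close enough to $X$, whence $\min f'<0$ is attained at a point $p$ with $X'(p)$ timelike, producing a closed timelike geodesic; applying the same argument to the maximum of $f'$ when $X'$ is spacelike somewhere yields a closed spacelike geodesic.

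The step I expect to be most delicate is guaranteeing nontriviality of the resulting orbit while simultaneously controlling its causal character: one must ensure that the chosen extremum of $f'$ is a point where $X'$ does not vanish, which the case analysis above handles, and that passing from $X$ to the circle-generating approximation $X'$ does not destroy the spacelike or timelike behaviour required by the final assertion, which is secured by fixing a point $q$ where $X$ has the desired causal type and invoking openness together with a sufficiently close choice of $X'$.
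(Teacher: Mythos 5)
Your proof is correct and follows essentially the same route as the paper's: the paper cites \cite{KN} (Proposition 5.7) and \cite{FJP} (Lemma 2.3) for the critical-point statement you derive directly from the Killing equation, uses the same torus-closure argument with dense rational directions for the approximation, and summarises your entire extraction step as ``the rest follows from the compactness of $M$.'' You have merely filled in, correctly, the details (including the case analysis on $f'=g(X',X')$ and the openness of the causal conditions) that the paper delegates to references or leaves implicit.
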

\begin{proof}
For the proof of the first statement see \cite{KN} Proposition 5.7 or \cite{FJP} Lemma 2.3 for more details. If $X$ generates a precompact one-parameter subgroup $I$ then its closure in $\Isom(M,g)$ is a compact connected torus $\T^k$ and it follows that $I$ can be approximated by closed one-parameter subgroups. The rest follows from the compactness of $M$.   
\end{proof}
\begin{corollary}\label{compact}
    A compact homogeneous semi-Riemannian manifold $(M,g)$, whose identity component of its isometry group is compact, admits both timelike and spacelike closed geodesics.
\end{corollary}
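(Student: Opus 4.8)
The plan is to deduce the corollary from Lemma~\ref{precompact} by producing, at any prescribed point, Killing fields that are timelike and spacelike respectively, and then applying the lemma's conclusion. Write $H = \Isom(M,g)^0$ for the identity component of the isometry group, which is assumed compact. Since $(M,g)$ is homogeneous, $H$ acts transitively on $M$, so $M = H/L$ where $L$ is the stabilizer of a base point $p_0$. The crucial observation is that because $H$ is compact, \emph{every} one-parameter subgroup it contains is automatically precompact, hence every Killing field arising from $\g = \Lie(H)$ satisfies the hypothesis of Lemma~\ref{precompact}.

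Next I would produce Killing fields of each causal type. The evaluation map $\g \to T_{p_0}M$, $X \mapsto X(p_0)$, is surjective because the $H$-action is transitive (its image is the tangent space to the orbit through $p_0$, which is all of $T_{p_0}M$). Since the metric $g_{p_0}$ on $T_{p_0}M$ has signature $(k,n-k)$ with both $k>0$ and $n-k>0$ in the Lorentzian case (indeed in any genuinely indefinite case), there exist tangent vectors $v_-, v_+ \in T_{p_0}M$ with $g_{p_0}(v_-,v_-)<0$ and $g_{p_0}(v_+,v_+)>0$. By surjectivity of the evaluation map, choose Killing fields $X_-, X_+ \in \g$ with $X_-(p_0)=v_-$ and $X_+(p_0)=v_+$. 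Then $X_-$ is timelike at $p_0$ and $X_+$ is spacelike at $p_0$, so in particular each is timelike (resp. spacelike) somewhere.

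Now I would invoke Lemma~\ref{precompact} twice. Each of $X_-, X_+$ generates a precompact one-parameter subgroup of $H$ (as $H$ is compact), so by the lemma each can be approximated by a Killing field whose flow is a circle action, and $(M,g)$ consequently admits closed geodesics; moreover the lemma guarantees a timelike closed geodesic from $X_-$ (timelike somewhere) and a spacelike closed geodesic from $X_+$ (spacelike somewhere). For the general semi-Riemannian conclusion, the same evaluation-map argument yields a Killing field that is non-null at $p_0$, which suffices to produce a closed geodesic.

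The one point requiring care, and the likely main obstacle, is ensuring that the approximating field $X'$ from Lemma~\ref{precompact} retains the causal character of $X_\pm$: the lemma only asserts the existence of a closed geodesic when $X'$ is spacelike (resp. timelike) \emph{somewhere}. I would address this by noting that the causal type ``$g(X(p),X(p))<0$ at some point'' is an open condition on the Killing field in the $C^0$ (indeed pointwise) topology on $\g$, so any sufficiently close approximation $X'$ of $X_-$ is still timelike at points near $p_0$; the same holds for the spacelike case. Hence the approximation in Lemma~\ref{precompact} can be taken close enough to preserve the relevant sign, and the desired timelike and spacelike closed geodesics both exist.
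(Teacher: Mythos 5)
Your proposal is correct and takes essentially the same approach the paper intends: Corollary~\ref{compact} is stated there as an immediate consequence of Lemma~\ref{precompact}, the point being exactly that compactness of $\Isom_0(M,g)$ makes every one-parameter subgroup precompact, while transitivity makes the evaluation map $\g \to T_{p_0}M$ surjective and so yields Killing fields timelike and spacelike somewhere. Your extra care that the approximating field $X'$ retains the causal character of $X_\pm$ (an open, pointwise condition) correctly fills in the one detail the paper leaves implicit in the phrasing of the lemma.
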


Now, suppose that $(M,g)$ is homogeneous and let $G=\Isom_0(M,g)$ be the identity component of the isometry group with Lie algebra $\g=\s \ltimes \r$ where $\s$ is the semisimple Levi factor and $\r$ is the solvable radical.

If $G$ admits a non-trivial compact semisimple Levi factor $K$, then applying Lemma \ref{precompact} one deduces existence of closed geodesics. Indeed, $K$ contains many closed one-parameter groups. In fact, the set of linear lines in the algebra $\k$ of $K$, generating closed one-parameter subgroups, is dense in the projective space $\mathbb{P}(\k)$.

On the other hand, if $\s$ has no compact factor then by \cite{BGZ} (Theorem A.) the $G-$action is locally free $i.e$ the isotropy subgroup $\Gamma\subset G$ is discrete. Moreover, the left invariant metric on $G$, obtained by pulling-back the metric $g$ to $G$, is in fact bi-invariant.
Therefore, geodesics in $G$ are right and left cosets of one-parameter subgroups. We distinguish two cases:

$\bullet$ \textbf{Case $\s\neq \{0\}$}: let $u\in \s$ be an elliptic element, that is, $\ad_u:\s\to \s$ is $\C-$diagonalizable with imaginary eigenvalues. Elliptic elements $u$ in $\s$ always exist and the $\ad_u-$action on $\g=\s\ltimes \r$ is also elliptic (in fact for any representation $\s\to \gl(V)$, the image of an elliptic element is elliptic). The right (or left) invariant Killing field $X_u$ determined by $u$ is equi-continuous $i.e$ it generates a precompact flow. Indeed, $\ad_u$ preserves a positive definite inner product on $\g$ and, hence, the left action of $\exp(tu)$ on $G/\Gamma$ preserves a right invariant Riemannian metric. Finally, by applying Lemma \ref{precompact} we deduce existence of closed geodesics.

$\bullet$ \textbf{Case $\s= \{0\}$}: so $\g=\r$ is solvable and, as discussed above, $M=G/\Gamma$ with $G$ endowed with a bi-invariant semi-Riemannian metric. Therefore, one-parameter groups in $G$ are geodesic and if $\gamma\in \Gamma$ belongs to a one-parameter group $I$, then $I$ projects to a closed geodesic in $G/\Gamma$. The exponential map for solvable groups fails to be surjective in general but it is, however, a diffeomorphism for nilpotent (simply connected) groups. Since every lattice in a solvable group intersects the nil-radical in a lattice, we deduce existence of closed geodesics in our case.
\subsection*{The Lorentzian case} If $(M,g)$ is a homogeneous compact Lorentzian manifold we can in fact deduce existence of both timelike and spacelike closed geodesics. Indeed, if the identity component $G$ of its isometry group is compact, then this follows from Corollary \ref{compact}. If $G$ is non-compact, then it follows by a classification in \cite{Zeg} that $(M,g)$ is covered by a metric product $H\times N$ where $N$ is a compact homogeneous Riemannian manifold and $H$ is a Lie group endowed with a bi-invariant metric. This Lie group is either $\widetilde{\SL}(2,\R)$ or an oscillator group $i.e$ an elliptic extension $\S^1\ltimes \Heis_{2n+1}$ of the Heisenberg group $\Heis_{2n+1}$ (let us mention, in fact, that general non necessarily transitive actions of Lie groups on compact Lorentz manifolds were also classified in \cite{AS1, AS2, Zeg2}). We deduce: 

$\bullet$ \textit{Closed spacelike geodesics}: if $N$ is non-trivial then we have closed spacelike geodesics. Suppose $N$ is trivial, so $M=H/\Gamma$ with $\Gamma\subset H$ a cocompact lattice. If $H=\widetilde{\SL}(2,\R)$ then any one-parameter subgroup intersecting $\Gamma$ in a hyperbolic element projects to a closed spacelike geodesic. If $H=\S^1\ltimes \Heis_{2n+1}$ then for the bi-invariant metric the subgroup $\Heis_{2n+1}$ is degenerate ($i.e$ lightlike) and totally geodesic. Since $\Gamma$ intersects $\Heis_{2n+1}$ in a lattice, we deduce existence of closed spacelike geodesics.

$\bullet$ \textit{Closed timelike geodesics}: In all cases, elliptic elements in the Lie algebra $\h$ of $H$ exist and are timelike with respect to the bi-invariant metric. They give rise to closed timelike geodesics.

\section{The geodesic equation for left invariant metrics}\label{LIM}
To a $C^1-$curve $\gamma:I\to G$ in a Lie group $G$, one associates the curve $D_{\gamma}:I\to \g$ in the Lie algebra $\g$ as follows: for $t\in I$ the velocity vector $\dot{\gamma}(t)$ lives in $T_{\gamma(t)}G$ which is identified with $\g$ via the left translation $L_{\gamma(t)}$. Put $D_{\gamma}(t)=(L_{\gamma(t)})_*^{-1}(\dot{\gamma}(t))$. One observes that $D_{\gamma}$ is constant if and only if $\gamma$ is the restriction to $I$ of a parameterized left coset of a one parameter group.\\

Suppose that $G$ is endowed with a left invariant semi-Riemannian metric, or equivalently, $\g$ is endowed with a semi-Riemannian inner product $\langle .,. \rangle$. Then a $C^2-$curve $\gamma:I\to G$ is a geodesic if and only if $D_{\gamma}$ solves the first order ODE, introduced in \cite{Arn}, which we refer to as the geodesic equation (also called the Euler-Arnold equation):  \begin{equation}\label{geo1}
    \dot{x}(t)=\ad_{x(t)}^{*}(x(t)).
\end{equation}

In other words, one has the vector field on $\g$ given by $x\mapsto \ad_{x}^{*}(x)$ and $\gamma$ is a geodesic if and only if $D_{\gamma}$ is a parameterized trajectory of the generated (local) flow. Clearly this vector field is $2$-homogeneous.

It is known that the geodesic equation (\ref{geo1}) admits at least one radial solution $\ad_v^*v=\lambda v$ with $v\neq 0$. Indeed, the fact that the geodesic vector field given by (\ref{geo1}) is $2$-homogeneous implies that it induces a map $\psi:\mathbb{P}^+(\g)\to \mathbb{P}^+(\g)$ if it does not vanish on $\g-\{0\}$, where $\mathbb{P}^+(\g)$ is the space of half lines from the origin. In this case, $\psi$ has even degree since it satisfies $\psi(x)=\psi(-x)$ which implies that it has fixed points. Also, observe that $\lambda\neq 0$ implies that $v$ is null.

Recall that $\langle .,. \rangle$ is bi-invariant if and only if $\ad_x^*=-\ad_x$ for all $x\in \g$, which implies that equation (\ref{geo1}) becomes $\dot{x}=0$. In this case all solutions are constant which means that geodesics in $G$ are nothing but left cosets of one-parameter groups.

Moreover, the case when the algebra $\g$ is quadratic $i.e$ $\g$ admits a bi-invariant semi-Riemannian inner product $\langle .,. \rangle$, the geodesic equation for any given metric on $\g$ can be simplified. More precisely, let $(\g, \langle .,. \rangle)$ be such an algebra and $q$ is any inner product on $\g$, then there is a unique $\langle .,. \rangle-$self-adjoint isomorphism $A_q:\g\to \g$ such that $q(v,w)=\langle v,A_q(w) \rangle$ for all $v,w\in \g$. The geodesic equation for $q$ can be rewritten as follows (see \cite{EFR} Proposition 4.2): 
\begin{equation}\label{geo2}
A_q(\dot{x})=[A_q(x),x]=-\ad_x(A_q(x)).
\end{equation}

\begin{remark}
    Let $K$ be a compact semisimple Lie group with Lie algebra $\k$ and Killing form $\kappa$. Let $A$ be a $\kappa-$self-adjoint isomorphism and $q(.,.)=\kappa(.,A.)$ the associated semi-Riemannian inner product. Consider the left invariant metric generated by $q$. By Lemma \ref{precompact} the left action of any one-parameter subgroup $I=\exp(tu)$ admits an orbit $Ig$ which is a geodesic. Thus, the one-parameter subgroup $g^{-1}Ig$ is a geodesic. In other words, it corresponds to a singular solution of the geodesic equation (\ref{geo2}) which means $[Au^{\prime},u^{\prime}]=0$ where $u^{\prime}=\Ad(g^{-1})u$. Therefore, every $u\in \k$ is conjugate to $u^{\prime}\in \k$ such that $u^{\prime}$ and $Au^{\prime}$ commute.
\end{remark}

\subsection{Dynamics of the geodesic flow} The fact that the vector field given by the ODE (\ref{geo1}) is $2$-homogeneous, implies that the scaling action sends solution to solution, up to affine reparameterization. This induces a (singular) foliation by curves on the projectivization $\mathbb{P}(\g)$ seen as the "dynamics" of the geodesic flow on the projective space. Similarly, we have a \textit{directed} foliation on the spherization $\pi:\g\to\mathbb{P}^+(\g)$. In the case of Lorentzian signature, this "flow" on $\mathbb{P}^+(\g)$ leaves invariant two conformal copies of the hyperbolic space $\H^{n-1}$, a conformal copy of the de Sitter space $\dS_{n-1}$, and two conformal Riemannian spheres $\pi(C^+), \pi(C^-)$ where $C^+$ and $C^-$ are the half null cones.

A solution in $\g$, of the geodesic equation (\ref{geo1}), is called \textit{direction-periodic} if it projects to a closed (possibly singular) trajectory in the spherization $\mathbb{P}^+(\g)$. Observe that both closed and weakly closed geodesics are direction-periodic.
\subsection{Compact quotients} Let $\Gamma\subset G$ be a cocompact lattice and consider the compact quotient $M = \Gamma \backslash G$. The left invariant metric on $G$ descends to $M$.  Projections to $M$ of left invariant vector fields are  fundamental vector fields of the right $G-$action on $M$. The tangent bundle $TG$ of $G$, under left translations, is identified with $G\times \g$. Therefore, we have a trivialisation $T M \to M \times \g$. For a curve $(x(t), v(t))$ in $T M$, one associates the curve $v(t)$ in $\g$, and this applies in particular to the case $v(t) = \dot{x}(t)$. A curve $x(t)$ in $M$ is a geodesic if and only if  $\dot{x}(t)$ solves the equation (\ref{geo1}) on $\g$.

Let $\Phi^t$ be the geodesic (local) flow. Then $\Phi^t(x, u) = (\phi(t, x, u), u(t))$, where $u(t)$ is a solution of the geodesic equation (\ref{geo1}). In particular, the map $(x, u) \in T M \mapsto u \in \g$ semi-conjugates the geodesic (local) flow on $T M$ to the (local) flow of the geodesic equation (\ref{geo1}). Let us pass to the projectivization of the tangent bundle $\mathbb{P}(T M ) = M \times \mathbb{P}(\g).$ The (singular) foliation on $\mathbb{P}(T M )$ induced by $\Phi^t$ projects to the (singular) foliation induced by the flow of the geodesic equation (\ref{geo1}) on $\mathbb{P}(\g)$.

Let $v\in \g$ be a constant solution of the equation ($\ref{geo1}$) $i.e$ $\ad_v^*v=0$. Let $I_v=\exp(tv)$ be the associated one-parameter group in $G$. The left cosets of $I_v$ are therefore geodesics. In other words, the orbits of the right action of $I_v$ on both $G$ and $M=\Gamma\backslash G$ are geodesics. Furthermore, a left coset $gI_v$ projects a closed geodesic in $M$ if and only if $gI_vg^{-1}\cap \Gamma$ is a lattice in $gI_vg^{-1}$.

On the other hand, if $v\in \g $ is a non-trivial radial solution, $i.e$ $\ad_v^*v=\lambda v$ with $\lambda\neq 0$, then the left cosets of the one-parameter group $I_v=\exp(tv)$ are geodesics but only up to parameterizations. Thus, the orbits of the right action are, up to reparameterizing, geodesics. Similarly, a left coset $gI_v$ projects a weakly closed geodesic, up to parameterizing, in $M$ if and only if $gI_vg^{-1}\cap \Gamma$ is a lattice in $gI_vg^{-1}$.

\section{No closed geodesics: proof of Theorem \ref{non closed}}\label{SolSL}
In this section we construct a left invariant Lorentzian metric on $G=\SL(2,\R)$ or $\SOL$ with the property that every compact quotient $\Gamma\backslash G$ admits no closed geodesics but they admit, however, a countable collection (up to reparameterizations) of weakly closed null geodesics.  For further details on the structure of Lie algebras and lattices in Lie groups we refer to \cite{Kir} and \cite{Rag}.

\subsection{$\SL(2,\R)$ case.} Endow the algebra $\sl(2,\R)$ with its Killing form $\langle .,.\rangle$ and let $e, h, f$ be a basis of $\sl(2,\R)$ such that $[f,e]=h \ \ [h,e]=-e \ \ [h,f]=f.$ Then $$\langle e,e\rangle=\langle e,h\rangle=\langle h,f\rangle=\langle f,f\rangle=0 \ \ \text{and} \ \ \langle e,f\rangle=\langle h,h\rangle=2.$$ 
Let $A$ be the $\langle .,.\rangle-$self-adjoint isomorphism of $\sl(2,\R)$ whose matrix with respect to $e, h, f$ is
$$A=\begin{pmatrix}
1 & 1 & 0\\
0 & 1 & 1\\
0 & 0 & 1
\end{pmatrix}.$$
Let $q$ be the Lorentzian metric given by $q(v,w)=\langle v, Aw\rangle$ for all $v,w\in \sl(2,\R)$. We have seen in (\ref{geo2}) that the geodesic equation for $q$ is $A\dot{v}=[Av,v]$.
\begin{lemma}\label{P}
    The plane $P=\Span(e,h)$ is invariant under the geodesic flow. More precisely, the geodesic vector field on $P$ has the form $v=(x,y)\in P\mapsto y^2e$. 
\end{lemma}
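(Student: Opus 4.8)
The plan is to verify the claim by an explicit computation of the geodesic vector field $v \mapsto A^{-1}[Av,v]$ restricted to the plane $P = \Span(e,h)$, checking both that $P$ is invariant and that the field takes the asserted form $y^2 e$. First I would compute the matrix $A^{-1}$, which is straightforward since $A$ is upper-triangular unipotent; one finds $A^{-1} = \begin{pmatrix} 1 & -1 & 1 \\ 0 & 1 & -1 \\ 0 & 0 & 1 \end{pmatrix}$ in the basis $e,h,f$. Then for $v = x e + y h \in P$ I would compute $Av = (x+y)e + yh$, using the first two columns of $A$.

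The heart of the calculation is the bracket $[Av, v]$. Writing $Av = (x+y)e + yh$ and $v = xe + yh$, I would expand $[Av,v]$ bilinearly using only the structure constants $[h,e]=-e$ and $[e,e]=[h,h]=0$. The cross terms give $[(x+y)e, yh] + [yh, xe] = -(x+y)y\,[h,e] \cdot(-1)$-type contributions; carefully tracking signs via $[e,h] = -[h,e] = e$, the result should collapse to a multiple of $e$ alone, since both $Av$ and $v$ lie in the subalgebra $\Span(e,h)$ and the only nonzero bracket there is $[h,e] = -e$. I expect $[Av,v] = y^2 e$ after the $x$-dependent terms cancel (the coefficient of $e$ being $(x+y)y - y\,x = y^2$, up to the sign convention). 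The final step is to apply $A^{-1}$ to this: since $A^{-1}e = e$ (as $e$ is fixed by the unipotent part acting on the first coordinate), we get $\dot{v} = A^{-1}(y^2 e) = y^2 e$, which lies in $P$, confirming both invariance and the stated form.

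The main obstacle, modest as it is, will be bookkeeping the sign conventions: the structure constants are given as $[h,e] = -e$ and $[h,f] = f$, so one must be vigilant that $[e,h] = e$ and that the antisymmetry is applied consistently when expanding the bracket. A secondary point worth stating explicitly is \emph{why} $P$ is invariant at the level of the flow and not merely that the vector field points along $e$ at points of $P$: since the geodesic field restricted to $P$ is everywhere tangent to $P$ (indeed valued in $\Span(e) \subset P$), the plane $P$ is an invariant set for the flow, and the reduced dynamics on $P$ is governed by $\dot{x} = y^2$, $\dot{y} = 0$. This last observation, that $y$ is constant along trajectories in $P$, is what makes the subsequent analysis of the dynamics tractable, so I would record it even though it follows immediately from the displayed form of the field.
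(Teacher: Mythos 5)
Your proposal is correct and follows essentially the same route as the paper: a direct bilinear expansion of $[Av,v]$ for $v=xe+yh$ using the only nonzero bracket $[h,e]=-e$ in $\Span(e,h)$, with the cross terms cancelling to give $[Av,v]=y^2e$, and then $A^{-1}(y^2e)=y^2e$ since $Ae=e$ (so the explicit computation of the full matrix $A^{-1}$ is unnecessary). Your added remarks --- that tangency of the field to $P$, indeed its taking values in $\R e\subset P$, yields flow-invariance, and that the reduced dynamics is $\dot{x}=y^2$, $\dot{y}=0$ --- are correct and consistent with what the paper uses afterwards.
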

\begin{proof}
    Put $v=xe+yh$, then $[Av,v]=[(x+ y)e,yh]+[yh,xe]= [ ye,yh]=y^2e.$ Thus $A^{-1}[Av,v]=y^2e$.
\end{proof}
One observes, in fact, that the plane $P$ is an $A-$invariant subalgebra of $\sl(2,\R)$ isomorphic to the algebra of the affine group of the real line. This subalgebra, as Lemma \ref{P} shows, is lightlike and totally geodesic for the Lorentzian metric $q$.
\begin{corollary}
    All solutions with initial conditions in $P$ are complete with trajectories affine lines parallel to $\R e$.
\end{corollary}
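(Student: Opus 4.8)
The plan is to read off the corollary directly from Lemma~\ref{P}. On the invariant plane $P=\Span(e,h)$, Lemma~\ref{P} tells us that the geodesic vector field is $v=(x,y)\mapsto y^2 e$, i.e.\ in coordinates $(x,y)$ with respect to the basis $e,h$, the geodesic equation \eqref{geo1} restricted to $P$ reads $\dot{x}=y^2$ and $\dot{y}=0$. The key observation is that the $e$-component of the field vanishes identically in $x$ and the $h$-component vanishes altogether, so the system decouples completely and can be integrated by hand.

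First I would integrate the ODE. Since $\dot{y}=0$, the coordinate $y$ is constant along any solution, say $y(t)\equiv y_0$; then $\dot{x}=y_0^2$ is constant, giving $x(t)=x_0+y_0^2\, t$. Thus every trajectory in $P$ is the affine line $t\mapsto (x_0+y_0^2 t,\, y_0)$, which is parallel to $\R e$ (it has no $h$-component in its velocity). Because the right-hand side of the ODE is a globally defined smooth (indeed polynomial) vector field on the two-dimensional vector space $P$ and the solution is explicitly affine in $t$, it exists for all $t\in\R$; hence the corresponding geodesic $D_\gamma(t)$ in $\g$ is defined on all of $\R$, which is precisely the statement that the geodesic is complete. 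Here I would invoke the correspondence from Section~\ref{LIM}: a geodesic $\gamma$ in $G$ corresponds to a trajectory $D_\gamma$ of \eqref{geo1}, and completeness of the trajectory in $\g$ yields completeness of $\gamma$.

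I expect no genuine obstacle: the content is entirely contained in Lemma~\ref{P}, and the only care needed is to translate "the geodesic vector field on $P$ is $y^2 e$'' into the explicit ODE, integrate it, and note that an affine trajectory defined for all time gives a complete geodesic. The one point worth stating carefully is completeness, which follows because the vector field $v\mapsto y^2 e$ is complete on $P$ (its flow is the explicit affine flow just computed, with no finite-time blow-up). The phrase \emph{trajectories affine lines parallel to $\R e$} is then just the geometric description of the solution curves $(x_0+y_0^2 t,\, y_0)$.
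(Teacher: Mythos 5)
Your proposal is correct and matches the paper's (implicit) argument: the corollary is stated without proof precisely because it follows by integrating the system $\dot{x}=y^2$, $\dot{y}=0$ from Lemma \ref{P}, exactly as you do, with constant $y$ and affine $x(t)=x_0+y_0^2t$ giving complete solutions along lines parallel to $\R e$. Your added remark that completeness of the trajectory of equation (\ref{geo1}) in $\g$ transfers to completeness of the geodesic in $G$ via the left-translation correspondence of Section \ref{LIM} is the right (standard) justification and is consistent with how the paper uses this identification throughout.
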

One checks that the vector $v_0=(\frac{3}{8},\frac{-1}{2},1)$ with respect to the basis $e,h,f$ is null and satisfies $Av_0=[Av_0, v_0]$ and $\R v_0, \R e$ are the only radial directions. Therefore, the line $\R v_0$ is invariant under the geodesic flow and the geodesic vector field on $\R v_0$ is $\lambda v_0\mapsto \lambda^2 v_0$. Hence, every solution in $\R v_0-\{0\}$ is incomplete. More precisely, for $\lambda>0$ the solution through $\lambda v_0$ is defined on a maximal interval of the form $(-\infty,b)$ and for $\lambda<0$ it is defined on a maximal interval of the form $(a,+\infty)$. For more details about completeness of left invariant metrics on $\SL(2,\R)$ see \cite{BM} or \cite{EFR}.

Now, define the map $\sigma:\sl(2,\R)\to \R v_0$ to be the projection on $\R v_0$ with respect to the new basis $e, h, v_0$. We have

\begin{proposition}
    The projection $\sigma$ is equivariant with respect to the geodesic flow. 
\end{proposition}
\begin{proof}
    Let $v=w+cv_0$ with $w\in P=\ker(\sigma)$, then $$[Av,v]=[A(w+cv_0),w+cv_0]=[Aw,w]+c([Av_0,w]+[Aw,v_0])+[cAv_0,cv_0].$$
    Since $P$ is an $A-$invariant subalgebra we have $\sigma(A^{-1}[Av,v])=A^{-1}[A\sigma(v),\sigma(v)]$ if and only if $([Av_0,w]+[Aw,v_0])\in P$. Put $v_0=z+\alpha f$ for $z\in P$, then $$[Av_0,w]+[Aw,v_0]=[\alpha f+w_1,w]+[Aw,z+\alpha f]$$
    for $w_1\in P$. So it remains to show that $([f,w]+[Aw,f])\in P$. This follows from the fact that $Aw-w\in \R e$ and $[f,e]=h\in P$.
\end{proof}
\begin{corollary}
    A solution of the geodesic equation with initial condition $v\in \sl(2,\R)$ is complete if and only if $v\in P$, i.e, $\sigma(v)=0$. It is $\R_{+}$ (resp. $\R_{-}$) incomplete if $\sigma(v)>0$ (resp. $\sigma(v)<0$).
\end{corollary}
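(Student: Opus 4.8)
The plan is to reduce the completeness question to the one-dimensional dynamics on $\R v_0$ through the equivariant projection $\sigma$. First recall that $\ker\sigma=P$, so that $\sigma(v)=0$ is precisely the condition $v\in P$; moreover the corollary above already supplies the ``if'' direction, namely that every solution with initial condition in $P$ is complete. It thus remains to handle an initial condition $v$ with $\sigma(v)\neq 0$ and to prove the two incompleteness assertions, after which the ``only if'' direction follows by contraposition.

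The main input will be the equivariance of $\sigma$ proved in the Proposition above. Since $\sigma$ is linear and intertwines the geodesic vector field $v\mapsto A^{-1}[Av,v]$ with its restriction to $\R v_0$, for any solution $v(t)$ the curve $\sigma(v(t))$ is again an integral curve, with the \emph{same} time parameter, of the geodesic field on $\R v_0$. Writing $\sigma(v(t))=\mu(t)\,v_0$ and using that this field is $\lambda v_0\mapsto\lambda^2 v_0$, the scalar $\mu$ solves $\dot\mu=\mu^2$, whence $\mu(t)=\frac{c}{1-ct}$ with $c=\mu(0)$, the sign of $c$ coinciding with that of $\sigma(v)$. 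For $c>0$ this maximal solution lives on $(-\infty,1/c)$ and blows up as $t\to(1/c)^-$, while for $c<0$ it lives on $(1/c,+\infty)$ and blows up as $t\to(1/c)^+$.

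To finish, let $(a,b)$ denote the maximal interval of existence of $v(t)$. As $\sigma$ is continuous and linear, $\sigma(v(t))$ is continuous on $(a,b)$ and, by uniqueness of solutions, agrees there with the maximal integral curve $\mu(t)\,v_0$ above. If $\sigma(v)>0$ then $\mu$ blows up at $1/c>0$, forcing $b\le 1/c<\infty$, so the solution is $\R_{+}$ incomplete; symmetrically, if $\sigma(v)<0$ then $a\ge 1/c>-\infty$ and the solution is $\R_{-}$ incomplete. In particular, completeness forces $\sigma(v)=0$, i.e. $v\in P$, which yields the ``only if'' direction and hence the claimed equivalence.

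The only genuinely delicate point is this last comparison of maximal intervals: one has to argue that the finite-time blow-up of the projected curve $\sigma(v(t))$ pulls back to a finite endpoint of the maximal interval of $v(t)$ itself. This is exactly where continuity of $\sigma$ together with uniqueness of ODE solutions enters, for if $v(t)$ extended past $1/c$ then $\sigma(v(t))$ would take a finite continuous value there, contradicting the blow-up. The remaining ingredients, the explicit integration of $\dot\mu=\mu^2$ and the bookkeeping of signs, are routine.
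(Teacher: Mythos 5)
Your proposal is correct and follows exactly the route the paper intends: the corollary is stated there as an immediate consequence of the equivariance of $\sigma$, the blow-up of $\dot{\mu}=\mu^{2}$ on $\R v_0$, and the earlier corollary giving completeness on the invariant plane $P$. Your write-up merely makes explicit the routine details (the comparison of maximal intervals via continuity of $\sigma$, and the sign bookkeeping matching the paper's statement that solutions through $\lambda v_0$ live on $(-\infty,b)$ for $\lambda>0$ and $(a,+\infty)$ for $\lambda<0$), all of which the paper leaves implicit.
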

\begin{remark}\label{dyn} The dynamics on $\mathbb{P}^+(\sl(2,\R))$ has exactly four fixed points $e^+,e^-,v_0^+$ and $v_0^-$ corresponding to the half lines through $e$ and $v_0$. The plane $P$ corresponds to a circle $\mathbb{P}^+(P)$. Every point in $\mathbb{P}^+(P)$ different from $e^-$ and $e^+$ converges in the future to $e^+$ and to $e^-$ in the past. The three invariant circles $\mathbb{P}^+(P)$, $\pi(C^+)$, and $\pi(C^-)$ divide the sphere $\mathbb{P}^+(\sl(2,\R))$ into four invariant open disks, where $\pi(C^+)$ and $\pi(C^-)$ are the spherizations of the half null cones (see Figure (\ref{fig1})). Each of these open disks is invariant and solutions are unbounded inside it ($i.e$ they converge to the boundary of the disk). Indeed, a bounded solution forces the existence of a constant one ($i.e$ radial) inside the disk which is impossible. 
\end{remark}
\begin{figure}[h!]
    \centering
    \includegraphics[scale=0.09]{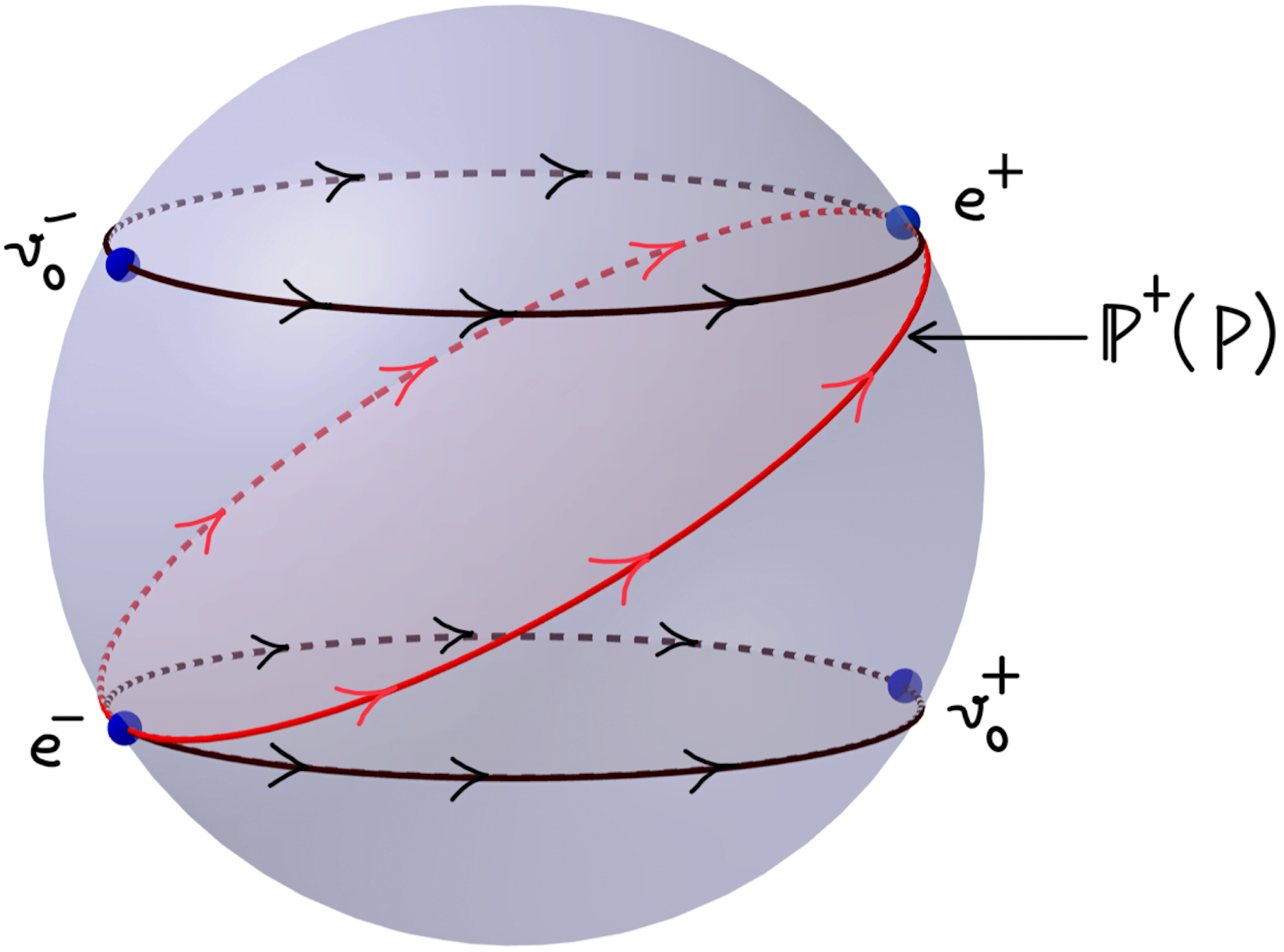}
    \caption{The dynamics on $\mathbb{P}^+(\sl(2,\R))$}
    \label{fig1}
\end{figure}

\begin{proposition} Let $g$ be the left invariant metric on $\SL(2,\R)$ generated by $q$ and $\Gamma\subset \SL(2,\R)$ be a cocompact lattice. Then $(\Gamma\backslash \SL(2,\R), g)$ is without closed geodesics. It admits, however, a countable collection of weakly closed geodesics. 
\end{proposition}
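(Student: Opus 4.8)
The plan is to establish two claims: first, that no closed (in the strong sense) geodesic exists, and second, that there are countably many weakly closed null geodesics. The key structural input is the classification of constant and radial solutions of the geodesic equation on $\sl(2,\R)$ already obtained: the only radial directions are $\R e$ and $\R v_0$, with $\ad_e^* e = 0$ (constant, hence $e$ gives genuine geodesics that are one-parameter subgroups) and $\ad_{v_0}^* v_0 = \lambda v_0$ with $\lambda \neq 0$ (giving geodesics only up to reparameterization). By the discussion in the \textbf{Compact quotients} subsection, any closed geodesic in $\Gamma\backslash\SL(2,\R)$ must be direction-periodic, hence projects to a closed trajectory of the induced flow on $\mathbb{P}^+(\sl(2,\R))$; and the same subsection tells us that a left coset $gI_v$ descends to a closed (resp.\ weakly closed) geodesic precisely when $gI_vg^{-1}\cap\Gamma$ is a lattice in the one-parameter group $gI_vg^{-1}$, where $v$ is a constant (resp.\ nonzero radial) solution.

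First I would rule out closed geodesics coming from non-singular solutions. A closed geodesic is in particular direction-periodic, so its velocity, read in $\g$ via the trivialization, projects to a closed orbit of the flow on the sphere $\mathbb{P}^+(\sl(2,\R))$. But Remark \ref{dyn} describes this flow completely: the only closed orbits are the three invariant circles $\mathbb{P}^+(P)$, $\pi(C^+)$, $\pi(C^-)$ together with the four fixed points, and on each of the four open disks bounded by these circles every solution is unbounded, converging to the boundary, so no periodic orbit lies in the interior. On $\mathbb{P}^+(P)$ the only fixed points are $e^\pm$ and every other point flows away toward $e^+$ in the future and $e^-$ in the past, so $\mathbb{P}^+(P)$ is not itself a periodic orbit of the directed flow either. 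Hence the only direction-periodic solutions are the fixed points, i.e.\ the radial directions $\R e$ and $\R v_0$. A genuine closed geodesic would then have to be a left coset of $I_e$ or $I_{v_0}$.

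Next I would eliminate these two candidates. The direction $\R v_0$ gives only reparameterized geodesics (since $\lambda\neq 0$), so it can at best yield weakly closed, never closed, geodesics; this handles $v_0$. For the direction $\R e$, which is a genuine constant solution, I must show that no conjugate $gI_eg^{-1}$ meets $\Gamma$ in a lattice. The point is that $e$ is a nilpotent element of $\sl(2,\R)$, so $I_e=\exp(\R e)$ is a unipotent one-parameter subgroup, and every conjugate $gI_eg^{-1}$ is again unipotent. A unipotent one-parameter subgroup of $\SL(2,\R)$ is isomorphic to $\R$ and contains no nontrivial element of finite covolume behaviour in the relevant sense; more precisely, a cocompact lattice $\Gamma$ in $\SL(2,\R)$ is cocompact, hence contains no nontrivial unipotent elements at all (by the standard fact that a uniform lattice in a semisimple Lie group is torsion-by-free and, having no parabolic elements, contains only semisimple elements). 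Therefore $gI_eg^{-1}\cap\Gamma$ is trivial for every $g$, so $I_e$ produces no closed geodesic. Combining with the previous paragraph, $\Gamma\backslash\SL(2,\R)$ has no closed geodesics.

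Finally I would produce the countable family of weakly closed null geodesics from the direction $\R v_0$. Here I use that $v_0$ is null (already checked) and that $gI_{v_0}g^{-1}$ descends to a weakly closed geodesic exactly when $gI_{v_0}g^{-1}\cap\Gamma$ is a lattice in $gI_{v_0}g^{-1}$. Since $v_0$ is not nilpotent — indeed $\R v_0$ is a radial direction distinct from the unipotent one $\R e$, and one checks $v_0$ is a hyperbolic (diagonalizable, $\R$-split) element — the group $I_{v_0}$ is a one-parameter subgroup whose conjugates are the maximal $\R$-split tori. A cocompact lattice $\Gamma$ in $\SL(2,\R)$ contains infinitely many hyperbolic elements (it is the fundamental group of a closed hyperbolic surface, so its nontrivial elements are all hyperbolic and fall into infinitely many conjugacy classes corresponding to distinct closed geodesics of the surface); each such $\gamma\in\Gamma$ lies in a unique one-parameter subgroup, and after conjugating $I_{v_0}$ to contain $\gamma$ we get $gI_{v_0}g^{-1}\cap\Gamma$ a lattice. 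Distinct conjugacy classes of hyperbolic elements give distinct weakly closed null geodesics, yielding the desired countable collection. The main obstacle I anticipate is the bookkeeping in this last step: matching the abstract conjugacy condition $gI_{v_0}g^{-1}\cap\Gamma$ being a lattice with concrete hyperbolic elements of $\Gamma$, and verifying that $v_0$ is genuinely $\R$-split (so that its conjugates exhaust the split tori that hyperbolic lattice elements live in) rather than being, say, an elliptic or mixed element that no real conjugate of $\Gamma$ could meet.
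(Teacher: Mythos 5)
Your proposal is correct and follows the paper's skeleton: reduce (weakly) closed geodesics to direction-periodic solutions, use the projective dynamics (Remark \ref{dyn}) to see that the only closed trajectories in $\mathbb{P}^+(\sl(2,\R))$ are the fixed directions $\R e$ and $\R v_0$, rule out $\R e$, and extract the countable family of weakly closed null geodesics from conjugates of the hyperbolic group $H=\exp(\R v_0)$ meeting $\Gamma$. The one genuine divergence is how you kill the parabolic direction: the paper argues dynamically, observing that the right action of the parabolic subgroup $I_e$ on $\Gamma\backslash\SL(2,\R)$ is the horocycle flow, so by Hedlund's minimality every coset projects to a \emph{dense} curve, which in particular is never closed; you instead invoke the algebraic fact that a cocompact lattice in $\SL(2,\R)$ contains no nontrivial unipotent elements, so $gI_eg^{-1}\cap\Gamma$ is trivial for all $g$, and conclude via the closedness criterion from the compact-quotients subsection. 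Your route is more elementary (it needs no input from homogeneous dynamics, only the standard no-unipotents-in-uniform-lattices fact), while the paper's buys strictly more information (the lightlike geodesics tangent to the $e$-direction are dense, not merely non-closed, which feeds the paper's subsequent remark about foliations by dense null geodesics). Two small inaccuracies in your last step are worth repairing but are not gaps: cocompact lattices in $\SL(2,\R)$ can contain torsion (elliptic elements), so rather than ``all nontrivial elements are hyperbolic'' you should say all infinite-order elements are hyperbolic (or pass to a torsion-free finite-index subgroup); and a hyperbolic element with negative eigenvalues lies on no one-parameter subgroup of $\SL(2,\R)$, which is fixed by replacing $\gamma$ with $\gamma^2$ before conjugating $H$ to contain it --- neither affects existence or countability of the weakly closed null geodesics.
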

\begin{proof} Let $\delta$ be a closed or weakly closed geodesic in $(\Gamma\backslash \SL(2,\R), g)$ and $\widetilde{\delta}$ its maximal lift to $\SL(2,\R)$. Since the only closed orbits in $\mathbb{P}(\sl(2,\R))$ are the constant ones, then $\widetilde{\delta}$ is mapped to a constant solution $d_{\delta}\in \mathbb{P}(\sl(2,\R))$ which shows that $\widetilde{\delta}$ is a leaf of the left invariant line field generated by $d_{\delta}$. Hence, $\delta$ is, up to reparameterization, the projection of a left coset of the one-parameter group tangent to $d_{\delta}$. The line $d_{\delta}$ is either $\R e$ or $\R v_0$. But $\R e$ is nilpotent $i.e$ it generates a parabolic subgroup of $\SL(2,\R)$, hence the projection of each of its left cosets is dense in $\Gamma\backslash \SL(2,\R)$ (the right action of a parabolic subgroup on $\Gamma\backslash\SL(2,\R)$ is nothing but the horocyle flow). If $d_{\delta}=\R v_0$ then it is hyperbolic and $\delta$ is, up to parameterization, the projection of a left coset $gH$ of the hyperbolic one-parameter group $H$ generated by $\R v_0$. Moreover, a left coset $gH$ projects to a closed orbit if and only if $gHg^{-1}$ intersects $\Gamma$ non-trivially. Thus weakly closed null geodesics are in one-to-one correspondence with conjugates of $H$ intersecting $\Gamma$ non-trivially. This is a countable collection and all of them are incomplete. 
    
\end{proof}
\begin{remark}
    The left action of an elliptic one-parameter subgroup $K$ on $\SL(2,\R)$ is, by assumption, isometric. Let $X$ be its associated right invariant Killing vector field. The length function $p\in \SL(2,\R)\mapsto g(X(p),X(p))$ is without critical points. Indeed, the $X-$orbit of a critical point is a closed geodesic which projects to a closed geodesic in $\Gamma\backslash \SL(2,\R)$ and this contradicts the previous proposition. On the other hand, the vector field $Y$ on $\Gamma\backslash\SL(2,\R)$ that generates the right action of $K$ has constant length function, since it is the projection of a left invariant vector field, but clearly it is not Killing. The right $K-$action on $\Gamma\backslash \SL(2,\R)$ defines a fibration by (non-geodesic) circles which is locally homogeneous (the left action is defined locally on $\Gamma\backslash \SL(2,\R)$ and sends circle to circle). In particular, these circles have constant geodesic curvature and when $K$ converges to the parabolic one-parameter group generated by $e$, these fibrations converge to a foliation by dense null geodesics.
\end{remark}
\subsection{$\SOL$ case}\label{sol} The construction will be similar to the $\SL(2,\R)$ case. Let $\SOL$ be the solvable unimodular three-dimensional group $\R\ltimes \R^2$ where $\R$ acts on $\R^2$ via the representation $t\mapsto \begin{pmatrix}
e^t & 0\\
0 & e^{-t}
\end{pmatrix} $. This is the identity component of the isometry group of the quadratic form $(x,y)\mapsto xy$ on $\R^2$. Its Lie algebra $\sol$ has a basis $e_1, e_2, h$ with brackets $[h,e_1]=e_1 \ \ [h,e_2]=-e_2.$ Let $q$ be the Lorentzian inner product satisfying  $$q( e_1,e_1)=q( e_1,e_2)=q( h,e_2)=q( h,h)=0 \ \ \text{and} \ \ q( e_1,h)=q( e_2,e_2)=1$$ 
Therefore, with respect to this basis we have $$\ad_{e_1}=\begin{pmatrix}
0 & 0 & -1\\
0 & 0 & 0\\
0 & 0 & 0
\end{pmatrix} \ \ \ad_{e_2}=\begin{pmatrix}
0 & 0 & 0\\
0 & 0 & 1\\
0 & 0 & 0
\end{pmatrix} \ \ \ad_{h}=\begin{pmatrix}
1 & 0 & 0\\
0 & -1 & 0\\
0 & 0 & 0
\end{pmatrix}$$
and one checks that $$\ad_{e_1}^*=\begin{pmatrix}
0 & 0 & -1\\
0 & 0 & 0\\
0 & 0 & 0
\end{pmatrix} \ \ \ad_{e_2}^*=\begin{pmatrix}
0 & 1 & 0\\
0 & 0 & 0\\
0 & 0 & 0
\end{pmatrix} \ \ \ad_{h}^*=\begin{pmatrix}
0 & 0 & 0\\
0 & -1 & 0\\
0 & 0 & 1
\end{pmatrix}.$$

It is clear that the abelian subalgebra $P=\Span(e_1,e_2)$ is invariant under the geodesic flow and the geodesic vector field on $P$ has the form $v=(x,y,0)\in P\mapsto y^2e_1$ since for $v=xe_1 + ye_2$ we have $\ad_v^*v=(x\ad_{e_1}^*+y\ad_{e_2}^*)(xe_1+ye_2)=y^2e_1.$ One also checks easily that the equation $\ad_v^*v=\lambda v$ for some $\lambda\in \R$ admits exactly $\R e_1$ and $\R h$ as solutions  with $\ad_{e_1}^*e_1=0$ and $\ad_h^*h=h$. Therefore, the null line $\R h$ is invariant under the geodesic flow and every solution in $\R h-\{0\}$ is incomplete.

Similar to the $\SL(2,\R)$ define the projection $\sigma:\sol\to \R h$ with respect to the basis $e_1,e_2,h$. Then

\begin{proposition}
    The projection $\sigma$ is equivariant with respect to the geodesic flow. 
\end{proposition}
\begin{proof} Let $v=xe_1+ye_2+zh$, then $$\ad_v^*v=(x\ad_{e_1}^*+y\ad_{e_2}^*+z\ad_{h}^*)(xe_1+ye_2+zh)=(y^2-xz)e_1-yze_2+z^2h.$$
Therefore, $\sigma(\ad_v^*v)=z^2h=\ad_{zh}^*zh=\ad_{\sigma(v)}^*\sigma(v)$
    
\end{proof}

One observes that the situation is similar to the previous case of $\sl(2,\R)$, the dynamics on $\mathbb{P}^+(\sol)$ is as described in Remark \ref{dyn}. One concludes
\begin{corollary} Let $g$ be the left invariant metric on $\SOL$ generated by $q$ and $\Gamma\subset \SOL$ be a cocompact lattice. Then $(\Gamma\backslash \SOL , g)$ is without closed geodesics. It admits, however, a countable collection of weakly closed geodesics. 
    
\end{corollary}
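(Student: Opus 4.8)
The plan is to mirror the $\SL(2,\R)$ Proposition almost verbatim, exploiting that the qualitative dynamics on $\mathbb{P}^+(\sol)$ is the same as in Remark \ref{dyn} and that the only radial directions are $\R e_1$ (a null constant solution, $\ad_{e_1}^* e_1 = 0$) and $\R h$ (null, with $\ad_h^* h = h$, hence incomplete). First I would take a closed or weakly closed geodesic $\delta$ in $\Gamma\backslash\SOL$ and lift it to a maximal geodesic $\widetilde{\delta}$ in $\SOL$. Since $\delta$ is direction-periodic and the only closed orbits of the flow of (\ref{geo1}) on $\mathbb{P}^+(\sol)$ are the fixed points, the associated curve $D_{\widetilde{\delta}}$ must be constant and radial; hence $\widetilde{\delta}$ is, up to reparameterization, a left coset of the one-parameter group $I_v=\exp(\R v)$ with $v\in\{e_1,h\}$, and $\delta$ closes up if and only if $gI_vg^{-1}\cap\Gamma$ is a lattice in $gI_vg^{-1}$ (the criterion established for compact quotients).

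The two cases are then settled by analysing these conjugates against the structure of $\Gamma$. Recall that a cocompact lattice $\Gamma\subset\SOL$ meets the nilradical $N=\exp(\Span(e_1,e_2))\cong\R^2$ in a lattice $L$, while the $\R$-part of $\Gamma$ is generated by an element $\exp(t_0 h)$ whose adjoint action $\mathrm{diag}(e^{t_0},e^{-t_0})$ preserves $L$, i.e. is conjugate to a hyperbolic matrix in $\SL(2,\Z)$. For $v=e_1$ a direct computation shows $g\,\exp(\R e_1)\,g^{-1}=\exp(\R e_1)$ for every $g$, since $\R^2$ is abelian and normal so conjugation only rescales $e_1$; thus $\exp(\R e_1)$ is the expanding eigenline of the hyperbolic automorphism. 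Because that eigenline meets $L$ trivially (its slope is irrational with respect to $L$), we obtain $\Gamma\cap\exp(\R e_1)=\{1\}$, so no coset of $\exp(\R e_1)$ closes up. This is the exact analogue of the density of horocycle orbits in the $\SL(2,\R)$ case, and it shows that the complete geodesics tangent to $\R e_1$ are never closed.

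For $v=h$ the cosets of $\exp(\R h)$ project to orbits of the suspension flow of the hyperbolic automorphism on $\Gamma\backslash\SOL$, and $g\exp(\R h)g^{-1}\cap\Gamma$ is a lattice precisely when $g\exp(\R h)g^{-1}$ contains a conjugate of the hyperbolic generator, equivalently when the corresponding suspension orbit is periodic. These periodic orbits correspond to the periodic points of the toral automorphism, i.e. the rational points of $\R^2/L$, a countable set, so there is a countable family of such conjugates, each producing one geodesic. Since $h$ is null and $\ad_h^* h=h$ with $\lambda\neq 0$, each of these is a weakly closed null geodesic and is incomplete. Combining the two cases yields that $\Gamma\backslash\SOL$ has no closed geodesics but carries a countable collection of weakly closed null geodesics, as claimed.

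The main obstacle I expect is the bookkeeping in the two lattice-intersection computations: verifying cleanly that $\exp(\R e_1)$ is conjugation-invariant and meets $\Gamma$ trivially (which rests on the irrationality of the hyperbolic eigenline relative to $L$), and matching conjugates of $\exp(\R h)$ with periodic suspension orbits to extract countability. The dynamical reduction to the two radial directions is already supplied by Remark \ref{dyn} together with the equivariance of $\sigma$, so the remaining content is essentially this arithmetic of the cocompact lattice $\Gamma$.
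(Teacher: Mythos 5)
Your proposal is correct and follows essentially the same route as the paper: reduce via the dynamics on $\mathbb{P}^+(\sol)$ (Remark \ref{dyn}) to the two radial directions $\R e_1$ and $\R h$, rule out $\R e_1$ because that conjugation-invariant line meets $\Gamma$ trivially, and identify the weakly closed null geodesics with the countably many conjugates of $\exp(\R h)$ intersecting $\Gamma$ non-trivially. The extra detail you supply --- the irrationality of the hyperbolic eigenline of the induced integer matrix on the lattice $L=\Gamma\cap\exp(\Span(e_1,e_2))$, and the counting of conjugates via periodic orbits of the suspension of the toral automorphism --- correctly makes explicit what the paper's terser argument leaves implicit.
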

\begin{proof}
    Let $\delta$ be a closed or weakly closed geodesic in $(\Gamma\backslash \SOL, g)$ and $\widetilde{\delta}$ its maximal lift to $\SOL$.  The geodesic $\widetilde{\delta}$ is mapped to a constant solution $d_{\delta}\in \mathbb{P}(\sol)$ for the same reason as in the $\sl(2,\R)$ case, which shows that $\widetilde{\delta}$ is a leaf of the left invariant line field generated by $d_{\delta}$. The line $d_{\delta}$ is either $\R e_1$ or $\R h$. But $\R e$ is impossible since $\Gamma$ intersects the stable and the unstable lines in $\R^2$ trivially. So $d_{\delta}=\R h$ and $\delta$ is, up to parameterization, the projection of a left coset $gH$ of the one-parameter group $H$ generated by $\R h$. Moreover, a left coset $gH$ projects to a closed orbit if and only if $gHg^{-1}$ intersects $\Gamma$ non-trivially. Thus weakly closed null geodesics are in one-to-one correspondence with conjugates of $H$ intersecting $\Gamma$ non-trivially. 
\end{proof}
\section{The $\SOL\times \Euc$ case: proof of Theorem \ref{non periodic}}\label{SolEuc} Let $\Euc=\SO(2)\ltimes \R^2$ be the identity component of the isometry group of the Euclidean plane. Its Lie algebra $\euc$ has a basis $f_1,f_2,e$ with brackets $[e,f_1]=-f_2$ and $[e,f_2]=f_1$. We have, with respect to this basis,
$$\ad_{f_1}=\begin{pmatrix}
0 & 0 & 0\\
0 & 0 & 1\\
0 & 0 & 0
\end{pmatrix} \ \ \ad_{f_2}=\begin{pmatrix}
0 & 0 & -1\\
0 & 0 & 0\\
0 & 0 & 0
\end{pmatrix} \ \ \ad_{e}=\begin{pmatrix}
0 & 1 & 0\\
-1 & 0 & 0\\
0 & 0 & 0
\end{pmatrix}.$$
Let $g$ be the Lorentz metric on $\euc$ satisfying $$g( f_1,f_1)=g( f_1,f_2)=g( e,f_2)=g( e,e)=0 \ \ \text{and} \ \ g( f_1,e)=g( f_2,f_2)=1.$$ 
One checks that $$\ad_{f_1}^*=\begin{pmatrix}
0 & 1 & 0\\
0 & 0 & 0\\
0 & 0 & 0
\end{pmatrix} \ \ \ad_{f_2}^*=\begin{pmatrix}
0 & 0 & -1\\
0 & 0 & 0\\
0 & 0 & 0
\end{pmatrix} \ \ \ad_{e}^*=\begin{pmatrix}
0 & 0 & 0\\
0 & 0 & 1\\
0 & -1 & 0
\end{pmatrix}.$$
The geodesic equation (\ref{geo1}) in this case is: for $v=(x,y,z)$ we have $$\ad_v^*v=\begin{pmatrix}
0 & x & -y\\
0 & 0 & z\\
0 & -z & 0
\end{pmatrix}\begin{pmatrix}
x \\
y \\
z 
\end{pmatrix}=\begin{pmatrix}
xy-yz \\
z^2 \\
-yz 
\end{pmatrix}.$$

Therefore, the abelian subalgebra $P=\{z=0\}$ is invariant under the geodesic flow and the geodesic vector field on $P$ has the form $(x,y,0)\in P\mapsto (xy,0,0)\in P$. Thus, on $P$, the constant solutions are the only periodic ones.

$\bullet$ \textit{Periodic solutions:} One sees that the vector field $(x,y,z)\mapsto (xy-yz, z^2,-yz)$ on the algebra $\euc$ is everywhere transverse to the plane distribution $\{y=0\}$ outside the subalgebra $P$. Hence, there are no periodic solutions in $\euc$ except the obvious constant ones inside the plane $P$.

$\bullet$ \textit{Radial solutions:} Suppose that $(xy-yz, z^2,-yz)=\lambda(x,y,z) $ for some $\lambda\neq 0$. Since there is no such a solution in $P$, then we can assume $z=1$. Thus, $(xy-y, 1,-y)=(\lambda x,\lambda y,\lambda) $ which implies $-y^2=1$ and this is impossible. Therefore, radial non-trivial solutions do not exist and constant solutions exist only in $P$.

$\bullet$ \textit{Direction-periodic solutions:}  The sphere $\mathbb{P}^+(\euc)$ is divided into four invariant open disks, bounded by $\mathbb{P}^+(P)$ and the two null circles similar to Figure (\ref{fig1}). A closed trajectory inside some open disk corresponds to a genuine periodic solution which is impossible as explained above. Therefore, direction-periodic  solutions are necessarily radial.
\subsection{The product $\SOL\times \Euc$}\label{product metric} Recall that in subsection \ref{sol} we constructed a Lorentz metric $q$ on $\sol$, endowed with the basis $e_1,e_2,h$, having the following properties: \begin{itemize}
    \item[$\circ$] Constant solutions of the geodesic equation are exactly the elements of $\R e_1$.
    \item[$\circ$] The line generated by $h$ is the only non-trivial radial direction.
    \item[$\circ$] There are no direction-periodic solutions except the radial ones.
\end{itemize}
Now, put the metric $q\oplus g$ on the algebra $\g=\sol\oplus \euc$ endowed with the product basis $e_1,e_2,h,f_1,f_2,e$. We have the immediate observations:
\begin{itemize}
    \item[$\circ$] Constant solutions for the geodesic equation on $\g$ project to constant solutions on both factors. Therefore, they consist of $\Span(e_1,f_1)\cup \Span(e_1,f_2)$.
    \item[$\circ$] Non-trivial radial solutions project to radial solutions with the same scaling factor. Thus, they consist of elements of $\R h-\{0\}$.
    \item[$\circ$] Direction-periodic solutions are radial and given by the above cases. Indeed, a direction-periodic solution projects to radial solutions on both factors, so it either corresponds to constant solutions on both factors or a non-trivial radial solution on the first factor and the zero solution on the second.
\end{itemize}

\subsection{A compact quotient without closed or weakly closed geodesics} Choose a hyperbolic element $A=\begin{pmatrix}
\lambda & 0 \\
0 & \lambda^{-1}
\end{pmatrix}$ and an irrational rotation $R_{\alpha}\in \SO(2)$ such that the $4\times 4$ matrix $\varphi=\begin{pmatrix}
A & 0 \\
0 & R_{\alpha}
\end{pmatrix}$ preserves a lattice $\Gamma_0\subset \R^4$, that is, $\varphi$ is conjugate to an element of $\SL(4,\Z)$. Such a map $\varphi$ exists (see Remark \ref{Borel}). Define the semi-direct product $\Gamma=\Z\ltimes \Gamma_0$ where $\Z$ acts on $\Gamma_0$ via $\varphi$. The discrete group $\Gamma$ is, in the obvious way, a cocompact lattice in $G=\SOL\times \Euc$. We have
\begin{proposition}
    Endow $G$ with the left invariant metric given by $q\oplus g$ on the algebra $\g=\sol\oplus \euc$. Then the compact quotient $\Gamma\backslash G$ admits no closed or weakly closed geodesics.
\end{proposition}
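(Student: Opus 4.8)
The plan is to reduce the statement to the lattice-intersection criterion from Section~\ref{LIM} and then rule out each candidate direction by a dynamical argument, using the irrationality of $R_\alpha$ for one family and the hyperbolicity of $A$ for the other. First I would recall that any closed or weakly closed geodesic $\delta$ in $\Gamma\backslash G$ is direction-periodic, so its maximal lift $\widetilde\delta$ has constant direction $d_\delta\in\mathbb{P}(\g)$ which must be a radial solution of the geodesic equation. By the classification of direction-periodic solutions for $q\oplus g$ in Subsection~\ref{product metric}, $d_\delta$ lies either in $\Span(e_1,f_1)\cup\Span(e_1,f_2)$ (the constant solutions) or in $\R h$ (the only non-trivial radial direction). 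Hence $\widetilde\delta$ is, up to reparametrization, a left coset $gI_v$ of $I_v=\exp(\R v)$ with $v$ spanning $d_\delta$, and by the criterion recalled in Section~\ref{LIM} it projects to a closed (resp. weakly closed) geodesic precisely when $gI_vg^{-1}\cap\Gamma$ is a lattice in the one-dimensional group $gI_vg^{-1}$. Since such a lattice is a nontrivial discrete subgroup, it suffices to prove $gI_vg^{-1}\cap\Gamma=\{1\}$ for every $g\in G$ and every $v$ in these directions.

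Next I would record the structure I intend to exploit. Write $G=(\R\times\SO(2))\ltimes\R^4$ with $\R^4=\Span(e_1,e_2,f_1,f_2)$ the normal abelian subgroup; then $\Gamma_0=\Gamma\cap\R^4$, because an element of $\Gamma=\Z\ltimes\Gamma_0$ lies in $\R^4$ exactly when its image in $\R\times\SO(2)$ vanishes, and as the $\Z$-generator projects to $(\log\lambda,\alpha)$ with $\log\lambda\neq0$ this forces its $\Z$-component to be zero. The adjoint action of $G$ on $\R^4$ factors through $\R\times\SO(2)$, preserves the splitting $\Span(e_1,e_2)\oplus\Span(f_1,f_2)$, acts as $\mathrm{diag}(e^{s},e^{-s})$ on the first summand and as a rotation on the second, and equals $\varphi$ for the $\Z$-generator. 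In particular $\Ad(g)$ never feeds $e_2$ into $e_1$, so $\Ad(g)v$ has vanishing $e_2$-component for every $v\in\Span(e_1,f_1)\cup\Span(e_1,f_2)$.

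I would then treat the two families. For $d_\delta=\R h$, the subgroup $I_h$ maps under $G\to\R\times\SO(2)$ onto the $\SOL$-factor $\R\times\{0\}$, and since conjugation is trivial on this abelian quotient so does $gI_hg^{-1}$. Any $\gamma=(n,w)\in\Gamma\cap gI_hg^{-1}$ then has trivial $\SO(2)$-part, i.e. $n\alpha\equiv0\pmod{2\pi}$, whence $n=0$ by irrationality of $\alpha$; thus $\gamma\in\Gamma_0\subset\R^4$, while $gI_hg^{-1}\cap\R^4=g(I_h\cap\R^4)g^{-1}=\{1\}$, so $\gamma=1$. For $d_\delta\subset\Span(e_1,f_1)\cup\Span(e_1,f_2)$ we have $v\in\R^4$ and $gI_vg^{-1}=\exp(\R\,\Ad(g)v)\subset\R^4$, so the intersection with $\Gamma$ is $\R\,\Ad(g)v\cap\Gamma_0$. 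Assuming a nonzero $w\in\Gamma_0$ sits on this line, I would iterate $\varphi$ backwards: since $w$ has no $e_2$-component, the $e_1$-coordinate contracts under $\varphi^n$ for $n\le0$ while the $(f_1,f_2)$-coordinates only rotate, so $\{\varphi^nw:n\le0\}$ is bounded; and $\varphi^kw=w$ forces $k=0$ (using $\lambda\neq1$ and $\alpha$ irrational), so this set is infinite. A bounded infinite subset of the discrete group $\Gamma_0$ cannot exist, so $w=0$.

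The main obstacle is the constant-solution family, where one must prevent the conjugated line $\R\,\Ad(g)v$ from ever meeting the lattice; what makes the argument go through is precisely that these constant solutions avoid the stable direction $e_2$, so that backward iteration of $\varphi$ keeps the orbit bounded and forces an impossible accumulation in $\Gamma_0$. Combining both cases gives $gI_vg^{-1}\cap\Gamma=\{1\}$ for all relevant $g$ and $v$, so $\Gamma\backslash G$ carries no closed or weakly closed geodesics, completing the proof.
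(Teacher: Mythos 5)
Your proposal is correct and follows essentially the same route as the paper: the same reduction of (weakly) closed geodesics to radial directions via the classification in Subsection \ref{product metric} and the lattice-intersection criterion of Section \ref{LIM}, followed by ruling out $\R h$ via the irrational rotational part and ruling out lines in $\Span(e_1,f_1)\cup\Span(e_1,f_2)$ via the expanding/rotating action of $\varphi$. Your backward-iteration argument (bounded infinite $\varphi$-orbit in the discrete group $\Gamma_0$) and your quotient argument for the $\R h$ case simply spell out in detail what the paper's two terse bullet points assert.
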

\begin{proof}
    Let $\delta$ be a (weakly) closed geodesic in $\Gamma\backslash G$ and $\widetilde{\delta}$ its maximal lift to $G$. Then $\widetilde{\delta}$ is mapped to a direction-periodic solution in $\g$. Discussion (\ref{product metric}) shows that $\widetilde{\delta}$ is mapped, in fact, to a radial direction $d_{\delta}$. Also, we have seen in (\ref{product metric}) that $d_{\delta}$ is either $\R h$ or any linear line contained in $\Span(e_1,f_1)\cup \Span(e_1,f_2)$. We claim that this is impossible in both cases. Indeed, $\widetilde{\delta}$ is, as an unparameterized curve, a left coset $gI_{\delta}$, of the one-parameter group $I_{\delta}$ tangent to $d_{\delta}$, such that $gI_{\delta}g^{-1}$ intersects $\Gamma$ non-trivially. But this is impossible because:
    \begin{enumerate}
        \item[$\star$] $\Gamma\cap \exp\bigl(\Span(e_1,f_1,f_2)\bigr)=\{0\}$ since the action of $\varphi$ on $\exp(\R e_1)$ is expanding and its action on $\exp(\Span(f_1,f_2))$ is an irrational rotation. Also, since $\exp\bigl(\Span(e_1,f_1,f_2)\bigr)$ is normal in $G$ then its intersection with $\Gamma$, even up to conjugacy, is trivial.
        \item[$\star$] All conjugates of $\exp(\R h)$ intersect $\Gamma$ trivially since the conjugacy action of every element of $\Gamma$ on $\R^4$ is either identically trivial or has an irrational rotational part.
    \end{enumerate}
    We conclude that closed or weakly closed geodesics do not exist in $\Gamma\backslash G$.
\end{proof}
\begin{remark}\label{Borel}
    By Borel-Harish-Chandra's theorem, the intersection $\Lambda=\SO(1,3)\cap \SL(4,\Z)$ is a lattice in $\SO(1,3)$. The lattice $\Lambda$ contains, in particular, many loxodromic elements. These are the elements of $\SO(1,3)$ that have exactly two fixed points when acting on the boundary at infinity of the hyperbolic space $\H^3$. They admit, then,  hyperbolic and elliptic factors. If the elliptic part of a loxodromic element in $\Lambda$ has finite order then its invariant plane $P$ is rational $i.e$ $P\cap \Z^4$ is a lattice in $P$. Since,  up to conjugacy, there are only finitely many elliptic elements in $\SL(2,\Z)$, then there is $k$ such that $A^k$ fixes a plane for every loxodromic element $A\in \Lambda$ whose elliptic part is of finite order. Suppose $\Lambda$ is torsion free (this is always possible up to finite index). If all loxodromic elements of $\Lambda$ have finite order elliptic parts then the (polynomial) function $f:A\in \Lambda\mapsto \det(A^k-Id)$ vanishes on $\Lambda$ (since $\Lambda$ in this case contains only unipotent and loxodromic elements). But, Borel's density theorem implies that $f$ vanishes identically on $\SO(1,3)$ which is impossible. Thus, $\Lambda$ contains (in fact many) loxodromic elements with irrational elliptic parts.
\end{remark}

\end{document}